\newtheorem{theorem}{Theorem}[section]
\newtheorem{lemma}[theorem]{Lemma}
\newtheorem{proposition}[theorem]{Proposition}
\newtheorem{corollary}[theorem]{Corollary}
\newtheorem{remark}[theorem]{Remark}
\DeclareMathOperator{\diam}{diam}
\DeclareMathOperator{\inte}{int}
\newcommand{\spb}[1]{\smallskip}
\newcommand{\mpb}[1]{\medskip}
\newcommand{\bpb}[1]{\bigskip}
\renewcommand{\a}{\alpha}
\renewcommand{\b}{\beta}
\newcommand{\e}{\varepsilon}
\renewcommand{\d}{\delta}
\newcommand{\g}{\gamma}
\newcommand{\G}{\Gamma}
\renewcommand{\L}{\mathcal{L}}
\title{New inequalities on the hyperbolicity constant of line graphs}
\author{Walter Carballosa$^{1}$, Jos\'e M. Rodr{\'\i}guez$^{1}$ and Jos\'e M. Sigarreta$^{2}$
\\
\\
$^1${\small Departamento de Matem\'aticas}
\\ {\small Universidad Carlos III de Madrid, Av. de la Universidad 30,
28911 Legan\'es, Madrid, Spain}\\
{\small waltercarb\@@gmail.com, jomaro\@@math.uc3m.es}
\\
$^2${\small Facultad de Matem\'aticas}
\\{\small Universidad Aut\'onoma de Guerrero,  Carlos E. Adame 5,
Col. La Garita, Acapulco, Guerrero, M\'{e}xico.} \\
{\small jsmathguerrero\@@gmail.com}
}
\begin{document}
%\DeclareGraphicsExtensions{.jpg,.pdf,.mps,.png}

%\maketitle{}

\begin{center}
{\LARGE New inequalities on the hyperbolicity constant of line graphs}\\[12pt]
{\large Walter Carballosa$^{1}$, Jos\'e M. Rodr{\'\i}guez$^{1}$, \\[5pt] and Jos\'e M. Sigarreta$^{2}$}\\[12pt]
$^1${\small Departamento de Matem\'aticas}\\
{\small Universidad Carlos III de Madrid,\\Av. de la Universidad 30,
28911 Legan\'es, Madrid, Spain}\\
{\small waltercarb\@@gmail.com, jomaro\@@math.uc3m.es}\\[3pt]

$^2${\small Facultad de Matem\'aticas}
\\{\small Universidad Aut\'onoma de Guerrero, \\ Carlos E. Adame 5,
Col. La Garita, Acapulco, Guerrero, M\'{e}xico.} \\
{\small jsmathguerrero\@@gmail.com}
\end{center}

\begin{abstract}
If X is a geodesic metric space and $x_1,x_2,x_3\in X$, a {\it
geodesic triangle} $T=\{x_1,x_2,x_3\}$ is the union of the three
geodesics $[x_1x_2]$, $[x_2x_3]$ and $[x_3x_1]$ in $X$. The space
$X$ is $\d$-\emph{hyperbolic} $($in the Gromov sense$)$ if any side
of $T$ is contained in a $\d$-neighborhood of the union of the two
other sides, for every geodesic triangle $T$ in $X$. We denote by
$\d(X)$ the sharp hyperbolicity constant of $X$, i.e.
$\d(X):=\inf\{\d\ge 0: \, X \, \text{ is $\d$-hyperbolic}\,\}\,. $
The main result of this paper is the inequality
$\d(G) \le \d(\L(G))$
for the line graph $\L(G)$ of every graph $G$.
We prove also the upper bound
$\d(\L(G)) \le 5 \d(G)+ 3 l_{max}$, where $l_{max}$ is the supremum of the lengths of
the edges of $G$.
Furthermore, if every edge of $G$ has length $k$, we obtain $\d(G) \le \d(\L(G)) \le 5 \d(G)+ 5k/2$.
\end{abstract}

{\it Keywords:}  Infinite Graphs; Line Graphs; Geodesics; Gromov Hyperbolicity.

{\it AMS Subject Classification numbers:}   05C69;  05A20; 05C50

\section{Introduction.}

Hyperbolic spaces play an important role in the geometric
group theory and in the geometry of negatively curved
spaces (see \cite{ABCD, GH, G1}).
The concept of Gromov hyperbolicity grasps the essence of negatively curved
spaces like the classical hyperbolic space, Riemannian manifolds of
negative sectional curvature bounded away from $0$, and of discrete spaces like trees
and the Cayley graphs of many finitely generated groups. It is remarkable
that a simple concept leads to such a rich
general theory (see \cite{ABCD, GH, G1}).

The study of mathematical properties of Gromov hyperbolic spaces
and its applications is a topic of recent and increasing interest in
graph theory; see, for instance \cite{BRS,
%BRSV1,
BRSV2,BRST,BHB1,CPRS,CRSV,K50,
K27,K21,K22,K23,K24,K56,MRSV,MRSV2,PeRSV,PRST,PRSV,PRT1,RSVV,
RT1,RT3,T,WZ}.

The theory of Gromov spaces was used initially for the study of
finitely generated groups (see \cite{G1}
%,G2}
and the references therein),
where it was demonstrated to have practical importance. This theory was applied principally
to the study of automatic groups (see \cite{O}), which play a
role in the science of computation.
The concept of hyperbolicity appears also in discrete mathematics, algorithms
and networking. For example, it has been shown empirically
in \cite{ShTa} that the internet topology embeds with better accuracy
into a hyperbolic space than into a Euclidean space
of comparable dimension. A few algorithmic problems in
hyperbolic spaces and hyperbolic graphs have been considered
in recent papers (see \cite{ChEs, Epp, GaLy, Kra}).
Another important
application of these spaces is secure transmission of information by
internet (see \cite{K27,K21,K22}). In particular, the hyperbolicity
plays an important role in the spread of viruses through the
network (see \cite{K21,K22}).
The hyperbolicity is also useful in the study of DNA data (see \cite{BHB1}).

In recent years several researchers have been interested in showing
that metrics used in geometric function theory are Gromov
hyperbolic. For instance, the Gehring-Osgood $j$-metric
is Gromov hyperbolic; and the Vuorinen $j$-metric is not Gromov
hyperbolic except in the punctured space (see \cite{Ha}).
The study of Gromov hyperbolicity of the quasihyperbolic and the
Poincar\'e metrics is the subject of
\cite{%APRT,
BB,BHK,HLPRT,HPRT,
%HPRT2,HPRT3,HPRT4,
PRT1,PRT2,PT,RT1,
%RT2,
RT3}.
In particular, in \cite{PRT1,RT1,RT3,T} it is proved the equivalence of
the hyperbolicity of many negatively curved surfaces and the hyperbolicity of a simple graph; hence, it is
useful to know hyperbolicity criteria for graphs.

In our study on hyperbolic graphs we use the notations of \cite{GH}. Let $(X,d)$ be a metric space and let $\g:[a,b]\longrightarrow X$ be a continuous function.
We say that $\g$ is a
\emph{geodesic} if $L(\g|_{[t,s]})=d(\g(t),\g(s))=|t-s|$ for every $s,t\in [a,b]$,
where $L$ denotes the length of a curve. We
say that $X$ is a \emph{geodesic metric space} if for every $x,y\in
X$ there exists a geodesic joining $x$ and $y$; we denote by $[xy]$
any of such geodesics (since we do not require uniqueness of
geodesics, this notation is ambiguous, but it is convenient). It is
clear that every geodesic metric space is path-connected.
If the metric space $X$ is
a graph, we use the notation $[u,v]$ for the edge joining the vertices $u$ and $v$.

In order to consider a graph $G$ as a geodesic metric space, we must identify
any edge $[u,v]\in E(G)$ with the real interval $[0,l]$ (if $l:=L([u,v])$);
therefore, any point in the interior
of any edge is a point of $G$ and, if we consider the edge $[u,v]$ as a graph with just one edge,
then it is isometric to $[0,l]$.
A connected graph $G$ is naturally equipped with a distance
defined on its points, induced by taking shortest paths in $G$.
Then, we see $G$ as a metric graph.

Throughout the paper we just consider simple (without loops and multiple edges) connected and
locally finite (i.e., in each ball there are just a finite number of edges) graphs;
these properties guarantee that the graphs are geodesic metric spaces.
Note that the edges can have arbitrary lengths.
We want to remark that by \cite[Theorems 8 and 10]{BRSV2} the study of the hyperbolicity of graphs with loops and multiple edges can be reduced
to the study of the hyperbolicity of simple graphs.

If $X$ is a geodesic metric space and $J=\{J_1,J_2,\dots,J_n\}$
is a polygon, with sides $J_j\subseteq X$, we say that $J$ is $\d$-{\it thin} if for
every $x\in J_i$ we have that $d(x,\cup_{j\neq i}J_{j})\le \d$. We
denote by $\d(J)$ the sharp thin constant of $J$, i.e.,
$\d(J):=\inf\{\d\ge 0: \, J \, \text{ is $\d$-thin}\,\}\,. $ If
$x_1,x_2,x_3\in X$, a {\it geodesic triangle} $T=\{x_1,x_2,x_3\}$ is
the union of the three geodesics $[x_1x_2]$, $[x_2x_3]$ and
$[x_3x_1]$; sometimes we write the geodesic triangle $T$ as
$T=\{[x_1x_2], [x_2x_3], [x_3x_1]\}$.
The space $X$ is $\d$-\emph{hyperbolic} $($or satisfies
the {\it Rips condition} with constant $\d)$ if every geodesic
triangle in $X$ is $\d$-thin. We denote by $\d(X)$ the sharp
hyperbolicity constant of $X$, i.e., $\d(X):=\sup\{\d(T): \, T \,
\text{ is a geodesic triangle in }\,X\,\}.$ We say that $X$ is
\emph{hyperbolic} if $X$ is $\d$-hyperbolic for some $\d \ge 0$. If
$X$ is hyperbolic, then $ \d(X)=\inf\{\d\ge 0: \, X \, \text{ is
$\d$-hyperbolic}\,\}.$
One can check that every geodesic polygon in $X$ with $n$ sides is $(n-2)\d(X)$-thin;
in particular, any geodesic quadrilateral is $2\d(X)$-thin.

There are several definitions of Gromov hyperbolicity.
These different definitions are equivalent in the sense
that if $X$ is $\d$-hyperbolic with respect to the definition $A$,
then it is $\d'$-hyperbolic with respect to the definition $B$
for some $\d'$ (see, e.g., \cite{BHB,GH}).
We have chosen this definition since
it has a deep geometric meaning (see, e.g., \cite{GH}).

The following are interesting examples of hyperbolic spaces.
The real line $\mathbb{R}$ is $0$-hyperbolic: in fact, any point of a geodesic
triangle in the real line belongs to two sides of the triangle
simultaneously, and therefore we can conclude that $\mathbb{R}$ is
$0$-hyperbolic.
The Euclidean plane $\mathbb{R}^2$ is not hyperbolic: it is clear that
equilateral triangles can be drawn with arbitrarily large diameter, so that
$\mathbb{R}^2$ with the Euclidean metric is not
hyperbolic. This argument can be generalized in a similar way to
higher dimensions: a normed vector space $E$ is hyperbolic if and
only if $\dim\ E=1$. Every metric tree with arbitrary length edges is
$0$-hyperbolic: in fact, all points of a geodesic triangle in a tree
belongs simultaneously to two sides of the triangle. Every bounded
metric space $X$ is $(\diam X)/2$-hyperbolic. Every simply connected
complete Riemannian manifold with sectional curvature verifying
$K\leq -c^2$, for some positive constant $c$, is hyperbolic. We
refer to \cite{BHB,GH} for more background and further results.

We want to remark that the main examples of hyperbolic graphs are the trees.
In fact, the hyperbolicity constant of a geodesic metric space can be viewed as a measure of
how ``tree-like'' the space is, since those spaces $X$ with $\delta(X) = 0$ are precisely the metric trees.
This is an interesting subject since, in
many applications, one finds that the borderline between tractable and intractable
cases may be the tree-like degree of the structure to be dealt with
(see, e.g., \cite{CYY}).

Given a Cayley graph (of a presentation with solvable word problem)
there is an algorithm which allows to decide if it is hyperbolic.
However, for a general graph or a general geodesic metric space
deciding whether or not a space is
hyperbolic is usually very difficult.
Therefore, it is interesting to obtain inequalities involving the hyperbolicity
constant.

It is a remarkable fact that the constants appearing in many results in the theory of hyperbolic spaces
depend just on a small number of parameters (also, this is common in the theory of negatively curved surfaces).
Usually, there is no explicit expression for these constants.
Even though sometimes it is possible to estimate the constants, those explicit values obtained are, in general, far from being
sharp (see, e.g., Theorem \ref{invarianza} and \eqref{firstineq} below).

The main result of this paper is the inequality
$\d(G) \le \d(\L(G))$
for the line graph $\L(G)$ of every graph $G$ (see Theorem \ref{t:IneqLineGraph}).

Line graphs were initially introduced in the papers \cite{W} and \cite{Kr},
although the terminology of line graph was used in \cite{HN} for the first time.

There are previous results relating the hyperbolicity constant
of the line graph $\L(G)$ with the hyperbolicity constant of the graph $G$.
In \cite[Theorem 2.4]{CRSV} the authors obtain the inequalities
\begin{equation}
\label{firstineq}
\frac1{12}\, \d(G) - \frac{3}{4} \le
\d(\L(G)) \le 12\, \d(G)+ 18,
\end{equation}
for graphs $G$ with edges of length $1$.
This result allows to obtain the main qualitative result of \cite{CRSV}: the line graph of $G$ is hyperbolic
if and only if $G$ is hyperbolic.
Although the multiplicative and additive constants appearing in \eqref{firstineq}
allow to prove this main result, it is a natural problem to improve the inequalities in \eqref{firstineq}.
In this paper we also improve the second inequality; in fact, Theorem \ref{t:IneqLineGraph} states
\begin{equation}
\label{secondineq}
\d(G) \le \d(\L(G)) \le 5\, \d(G)+ 3 \sup_{e\in E(G)} L(e),
\end{equation}
where here the edges of $G$ can have arbitrary lengths.
The second inequality in \eqref{secondineq}
can be improved for graphs with edges of length $k$ (see Corollary \ref{c:IneqLineGraphk})
in the following way:

\[
\d(G)  \leq  \d(\L(G))  \leq  5 \d(G) + 5k/2.
\]

Also, we obtain for graphs with edges of length $k$ other inequalities involving the hyperbolicity constant of $\L(G)$
(see Theorem \ref{t:j} and Corollary \ref{c:sumdd}).

\

\section{Background and previous results.}
Let $G$ be a graph such that its edges $E(G)=\{e_i\}_{i\in\mathcal{I}}$ have arbitrary lengths.
The \emph{line graph} $\L(G)$ of $G$ is a graph which has a vertex $V_{e_i}\in V(\L(G))$
for each edge $e_i$ of $G$, and an edge joining $V_{e_i}$ and $V_{e_j}$ when $e_i \cap e_j \neq \varnothing$.
Note that we have a complete subgraph $K_n$ in $\L(G)$ corresponding to one vertex $v$ of $G$ with degree $\deg_{G}(v)=n$.
Some authors define the edges of line graph with length $1$ or another fixed constant, but we define the length of the edge $[V_{e_i},V_{e_j}]\in E(\L(G))$ as $(L(e_i) + L(e_j))/2$.
Note that if every edge in $G$ has length $k$, then every edge in $\L(G)$ also has length $k$.

\

Let $(X,d_X)$ and $(Y,d_Y)$  be two metric spaces. A map $f: X\longrightarrow Y$ is said to be
an $(\alpha, \beta)$-\emph{quasi-isometric embedding}, with constants $\alpha\geq 1,\
\beta\geq 0$ if we have for every $x, y\in X$:
$$
\alpha^{-1}d_X(x,y)-\beta\leq d_Y(f(x),f(y))\leq \alpha d_X(x,y)+\beta.
$$
We say that $f$ is $\varepsilon$-\emph{full} if
for each $y \in Y$ there exists $x\in X$ with $d_Y(f(x),y)\leq \varepsilon$.

\medskip

A map $f: X\longrightarrow Y$ is said to be
a \emph{quasi-isometry} if there exist constants $\alpha\geq 1,\
\beta,\varepsilon \geq 0$ such that $f$ is a $\varepsilon$-full
$(\alpha, \beta)$-quasi-isometric embedding.

\medskip

Two metric spaces $X$ and $Y$ are \emph{quasi-isometric} if there exist
a quasi-isometry $f:X\longrightarrow Y$.

\medskip

A fundamental property of hyperbolic spaces is the following:

\begin{theorem}[Invariance of hyperbolicity]\label{invarianza}
Let $(X,d_X)$, $(Y,d_Y)$ be two geodesic metric spaces and
$f: X\longrightarrow Y$ an $(\alpha, \beta)$-quasi-isometry embedding.
\begin{itemize}
\item [i)] If $Y$ is $\d$-hyperbolic, then $X$ is $\d'$-hyperbolic, where $\d'$ is a
constant which just depends on $\d$, $\a$ and $\b$.
\item[ii)] If $f$ is $\e$-full, then $X$ is hyperbolic if and only if $Y$ is hyperbolic.
Furthermore, if $X$ is $\d'$-hyperbolic, then $Y$ is $\d$-hyperbolic, where $\d$ is a
constant which just depends on $\d'$, $\a$, $\b$ and $\e$.
\end{itemize}
\end{theorem}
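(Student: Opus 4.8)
My plan is to deduce both parts from the \emph{stability of quasi-geodesics} (the Morse Lemma): in a $\d$-hyperbolic geodesic space every $(\a,\b)$-quasi-geodesic and any geodesic with the same endpoints lie within Hausdorff distance $H$ of one another, where $H=H(\d,\a,\b)$ depends only on $\d,\a,\b$. The elementary first step is to observe that if $\s$ is a geodesic in $X$ and $f$ is an $(\a,\b)$-quasi-isometric embedding, then $f\circ\s$ is an $(\a,\b)$-quasi-geodesic in $Y$, since the defining two-sided inequality is inherited verbatim from the parametrization of $\s$ by arc length. This allows me to carry geodesic triangles from $X$ into $Y$, where they become quasi-geodesic triangles uniformly close to genuine geodesic triangles.

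For part i) I would fix a geodesic triangle $T=\{[x_1x_2],[x_2x_3],[x_3x_1]\}$ in $X$ and a point $p\in[x_1x_2]$, and let $T'$ be the geodesic triangle in $Y$ with vertices $f(x_1),f(x_2),f(x_3)$. By the first step each side $f([x_ix_j])$ is an $(\a,\b)$-quasi-geodesic, so by the Morse Lemma it lies within $H$ of the corresponding side of $T'$, and conversely. Tracing $p$ through the three estimates---first push $f(p)$ within $H$ onto the side $[f(x_1)f(x_2)]$, then use the $\d$-thinness of $T'$ in the $\d$-hyperbolic space $Y$ to move within $\d$ onto the union of the other two geodesic sides, then move within $H$ back onto $f([x_2x_3])\cup f([x_3x_1])$---I obtain a point $q\in[x_2x_3]\cup[x_3x_1]$ with $d_Y(f(p),f(q))\le \d+2H$. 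The lower quasi-isometry bound $d_X(p,q)\le \a\big(d_Y(f(p),f(q))+\b\big)$ then gives $d_X\big(p,[x_2x_3]\cup[x_3x_1]\big)\le \a(\d+2H+\b)=:\d'$, so $X$ is $\d'$-hyperbolic with $\d'$ depending only on $\d,\a,\b$.

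For part ii) I would build a quasi-inverse $g:Y\to X$ by choosing, for each $y\in Y$, a point $g(y)$ with $d_Y(f(g(y)),y)\le\e$, which is possible precisely because $f$ is $\e$-full. A short triangle-inequality estimate shows $|d_Y(f(g(y_1)),f(g(y_2)))-d_Y(y_1,y_2)|\le 2\e$, and substituting this into the two defining inequalities for $f$ shows that $g$ is itself an $(\a,\b')$-quasi-isometric embedding with $\b'$ depending only on $\a,\b,\e$. The equivalence is then symmetric: if $Y$ is hyperbolic then $X$ is, by part i); and if $X$ is $\d'$-hyperbolic, applying part i) to $g$ shows $Y$ is $\d$-hyperbolic with $\d$ depending only on $\d',\a,\b,\e$.

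The genuinely nonelementary ingredient is the Morse Lemma itself, whose proof (subdividing the quasi-geodesic and iterating the length-versus-distance estimate special to hyperbolic spaces) is the real work; all the remaining steps are bookkeeping with the two quasi-isometry inequalities and the thinness of triangles. I would therefore cite the Morse Lemma from \cite{GH} (or, for a self-contained account, prove it first and treat the two parts above as corollaries).
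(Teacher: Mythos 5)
Your proof is correct, and it is the standard argument for this classical theorem: geodesics push forward to $(\a,\b)$-quasi-geodesics, the Morse Lemma (stability of quasi-geodesics) transfers thinness of triangles back through the lower quasi-isometry inequality for part i), and an $\e$-full map admits a quasi-inverse $g$ which is itself a quasi-isometric embedding, so part ii) follows by applying part i) to $g$; your constant-tracking ($\d'=\a(\d+2H+\b)$, and $\b'$ depending only on $\a,\b,\e$) is accurate. Note that the paper itself offers no proof of Theorem \ref{invarianza}: it is quoted as a fundamental background fact from the literature (cf.\ \cite{GH}), so your write-up simply supplies, correctly, the proof the paper delegates to its references.
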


\medskip

We will need the following result (see \cite[Lemma 2.1]{RT1}):
\begin{lemma}\label{l:ClosedCurve}
Let us consider a geodesic metric space $X$. If every geodesic triangle in $X$ which
is a simple closed curve, is $\d$-thin, then $X$ is $\d$-thin.
\end{lemma}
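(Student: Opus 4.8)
The plan is to verify the Rips condition for an \emph{arbitrary} geodesic triangle by reducing it to the case of a simple closed curve, for which the hypothesis already supplies the bound. So let $T=\{[x_1x_2],[x_2x_3],[x_3x_1]\}$ be any geodesic triangle in $X$, fix a point $p$ on one of its sides, say $p\in[x_1x_2]$, and set $S:=[x_2x_3]\cup[x_3x_1]$; it suffices to show $d(p,S)\le\d$. If $p\in S$ this is immediate, so I assume $p\notin S$.

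First I would isolate the relevant subarc. Parametrize $[x_1x_2]$ by arc length as $\g:[0,\ell]\to X$ with $\g(0)=x_1$, $\g(\ell)=x_2$ and $\g(t_0)=p$. Since each side is the continuous image of a compact interval, $S$ is closed, so the sets $\{t\le t_0:\g(t)\in S\}$ and $\{t\ge t_0:\g(t)\in S\}$ are nonempty (they contain $0$ and $\ell$) and closed. Let $a:=\g(s)$ with $s:=\max\{t\le t_0:\g(t)\in S\}$ and $b:=\g(u)$ with $u:=\min\{t\ge t_0:\g(t)\in S\}$. Then $s<t_0<u$, the subgeodesic $\s:=\g|_{[s,u]}$ joins $a$ to $b$, contains $p$, and meets $S$ only at its endpoints, i.e. $\s\cap S=\{a,b\}$.

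Next I would close $\s$ into a simple closed geodesic triangle whose two remaining sides lie inside $S$. In the representative case $a\in[x_3x_1]$ and $b\in[x_2x_3]$, the subgeodesic $\tau_1$ of $[x_3x_1]$ from $a$ to $x_3$ and the subgeodesic $\tau_2$ of $[x_2x_3]$ from $x_3$ to $b$, together with $\s$, form the geodesic triangle $\{a,b,x_3\}$, and $\s$ meets $\tau_1\cup\tau_2$ only at $a$ and $b$. If $\tau_1$ and $\tau_2$ meet only at $x_3$ this triangle is already a simple closed curve; otherwise I would let $c$ be the point of $\tau_2$ nearest to $b$ that also lies on $\tau_1$ and replace $x_3$ by $c$, obtaining $T':=\{a,b,c\}$ with sides $\s$, the subarc of $\tau_2$ from $b$ to $c$, and the subarc of $\tau_1$ from $c$ to $a$. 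By the choice of $c$ and of $a,b$, this $T'$ is a simple closed curve whose two sides other than $\s$ are contained in $S$. The configurations in which $a$ and $b$ lie on the same side of $T$ are treated analogously, producing a geodesic bigon that I would regard as a degenerate triangle with one repeated vertex; this is again a simple closed curve with the side opposite $\s$ contained in $S$.

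Finally, since $T'$ is a simple closed geodesic triangle, the hypothesis gives that it is $\d$-thin; as $p$ lies on the side $\s$ of $T'$, the distance from $p$ to the union of the other two sides of $T'$ is at most $\d$, and because those two sides are contained in $S$ this yields $d(p,S)\le\d$. Since $p$ was an arbitrary point on a side of an arbitrary geodesic triangle, $X$ is $\d$-thin. I expect the middle step to be the main obstacle: the careful bookkeeping needed to guarantee that the closing-up procedure really produces a \emph{simple} closed curve whose non-$\s$ sides stay inside $S$, including the degenerate configurations (coincident vertices or bigons) and the possibility that $\tau_1$ and $\tau_2$ intersect repeatedly.
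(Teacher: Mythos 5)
Your proof is correct. The paper itself does not prove this lemma---it quotes it from \cite[Lemma 2.1]{RT1}---but your argument (isolating $p$ in a maximal subarc $\s$ of $[x_1x_2]$ meeting $S$ only at its endpoints, closing $\s$ up through $x_3$, trimming at the point $c$ to guarantee simplicity, and treating the bigon configurations as degenerate triangles) is essentially the standard proof of that cited lemma; the degenerate cases are legitimately covered by the hypothesis, since the paper's definition of a geodesic triangle does not require the three vertices to be distinct, so a geodesic bigon that is a simple closed curve is itself a geodesic triangle that is a simple closed curve.
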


This lemma has the following direct consequence.
As usual, by \emph{cycle} we mean a simple closed curve, i.e., a path with different vertices in a graph,
except for the last one, which is equal to the first vertex.

\begin{corollary} \label{c:ClosedCurve}
In any graph $G$,
$$
\d(G) = \sup \{
\d(T ) : T \text{ is a geodesic triangle that is a cycle}
\}.
$$
\end{corollary}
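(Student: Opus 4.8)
The plan is to prove the two inequalities separately, the nontrivial one coming essentially for free from Lemma \ref{l:ClosedCurve}. Write $\delta^{*} := \sup\{\d(T): T \text{ is a geodesic triangle in } G \text{ that is a cycle}\}$, and recall that by definition $\d(G) = \sup\{\d(T): T \text{ is a geodesic triangle in } G\}$. The goal is thus to show $\d(G) = \delta^{*}$.

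First I would dispatch the easy inequality $\delta^{*} \le \d(G)$. Since every geodesic triangle that is a cycle is in particular a geodesic triangle in $G$, the supremum defining $\delta^{*}$ is taken over a subfamily of the triangles defining $\d(G)$; taking suprema over a smaller family can only decrease the value, so the bound is immediate.

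For the reverse inequality $\d(G) \le \delta^{*}$ I would invoke Lemma \ref{l:ClosedCurve}. The one point deserving care is the observation that, for any fixed geodesic triangle $T$, the triangle $T$ is itself $\d(T)$-thin: indeed, the set of constants $\d$ for which $T$ is $\d$-thin is exactly $[\d(T),\infty)$, because $\d(T)$ is realized as the supremum over the points of $T$ of the distance to the union of the two other sides. Consequently every geodesic triangle $T$ that is a cycle satisfies $\d(T) \le \delta^{*}$ and is therefore $\delta^{*}$-thin. Applying Lemma \ref{l:ClosedCurve} with $\d = \delta^{*}$, we conclude that the whole space $G$ is $\delta^{*}$-hyperbolic, i.e. every geodesic triangle in $G$ is $\delta^{*}$-thin; taking the supremum over all such triangles yields $\d(G) \le \delta^{*}$.

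Combining the two bounds gives $\d(G) = \delta^{*}$, which is the claim. I do not expect any genuine obstacle, since the statement is a direct consequence of the lemma; the only step requiring a line of justification is the remark that the sharp constant $\d(T)$ is itself an admissible thinness constant for $T$, which is what guarantees that the hypothesis of Lemma \ref{l:ClosedCurve} is actually met at the level $\delta^{*}$.
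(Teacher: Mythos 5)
Your proposal is correct and follows exactly the route the paper intends: the paper derives Corollary \ref{c:ClosedCurve} as a ``direct consequence'' of Lemma \ref{l:ClosedCurve}, which is precisely your argument, with the easy inequality coming from restricting the supremum to a subfamily and the hard one from applying the lemma at the level $\delta^{*}$. Your extra observation that the infimum defining $\d(T)$ is attained (so each cycle triangle really is $\delta^{*}$-thin) is the right point to make explicit; the paper leaves it implicit.
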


\medskip

The next result follows from $\d(X)\le (\diam X)/2$ (see \cite[Theorem 11]{RSVV} for a detailed proof).

\begin{theorem}
\label{t:examplesdelta}
The cycle graphs with every edge of length $1$ verify $\d(C_n)=n/4$ for every $n\ge 3$.
\end{theorem}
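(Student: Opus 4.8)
The plan is to establish the two inequalities $\d(C_n)\le n/4$ and $\d(C_n)\ge n/4$ separately. For the upper bound, observe that any two points of $C_n$ are joined by two complementary arcs whose lengths sum to $n$, so their distance (the shorter arc) is at most $n/2$, with equality for antipodal points; hence $\diam(C_n)=n/2$. Feeding this into the inequality $\d(X)\le(\diam X)/2$ quoted just above the statement gives $\d(C_n)\le n/4$ at once.

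For the lower bound it suffices, by the definition of $\d(C_n)$ as a supremum over geodesic triangles, to exhibit a single triangle $T$ with $\d(T)\ge n/4$. I would identify $C_n$ with the interval $[0,n]$ having $0$ and $n$ identified, distances measured along the shorter arc, and take the three vertices $x_1=0$, $x_2=n/2$, $x_3=3n/4$, with sides chosen as the arcs $[x_1x_2]=[0,n/2]$, $[x_2x_3]=[n/2,3n/4]$ and $[x_3x_1]=[3n/4,n]$. Each arc has length at most $n/2$, so each is a geodesic; they have pairwise disjoint interiors and their union is all of $C_n$, so $T$ is a genuine geodesic triangle that is a simple closed curve.

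The key computation is then to take the midpoint $p=n/4$ of the long side $[x_1x_2]$ and measure its distance to the union of the other two sides, which is exactly the complementary arc $[n/2,n]$. Since $p$ lies in the interior of $[0,n/2]$, any path from $p$ into that complementary arc must pass through one of the endpoints $x_1=0$ or $x_2=n/2$, and both lie at distance $n/4$ from $p$; a short check confirms every interior point of $[n/2,n]$ is farther, so $d\bigl(p,[x_2x_3]\cup[x_3x_1]\bigr)=n/4$. This yields $\d(T)\ge n/4$, hence $\d(C_n)\ge n/4$, and combining with the upper bound gives $\d(C_n)=n/4$.

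The only real obstacle is the choice of configuration. A naive equilateral triangle with equally spaced vertices gives each side length $n/3$ and only produces thinness $n/6$, so one must realize that pushing a single side to the maximal geodesic length $n/2$ — equivalently, placing two of the vertices antipodally — is what saturates the diameter bound. Once this is seen, verifying that each arc satisfies the geodesic condition $L\le n/2$ and that the far-side midpoint sits at distance exactly $n/4$ from the other two sides is entirely routine.
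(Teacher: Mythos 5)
Your proof is correct, and it is in fact more complete than what the paper provides: the paper gives no proof of this theorem at all, merely observing that it follows from $\delta(X)\le(\diam X)/2$ and citing [RSVV, Theorem 11] for the details. Your upper bound is exactly the argument the paper gestures at, namely $\diam(C_n)=n/2$ combined with that diameter inequality. Your lower bound supplies the half the paper outsources: an explicit geodesic triangle with two antipodal vertices $x_1=0$, $x_2=n/2$ and a third vertex splitting the complementary arc, so that the midpoint $p=n/4$ of the long side lies at distance exactly $n/4$ from the union of the other two sides (any path from $p$ to that union must exit through $0$ or $n/2$, both at distance $n/4$). The supporting checks are all sound: an arc of length at most $n/2$ in $C_n$ is a geodesic because every sub-arc is still the shorter of the two connections between its endpoints; the three arcs form a simple closed curve; and your remark that antipodal placement of two vertices, rather than the equilateral configuration (which only yields $n/6$), is what saturates the bound correctly identifies the extremal structure. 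The reference [RSVV] argues along the same lines (a triangle degenerating to two antipodal points and the two half-cycles as sides), so your construction is essentially the standard one, written out in a self-contained way where the paper relies on a citation.
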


This theorem has the following direct consequence.

\begin{corollary}
\label{c:examplesdelta}
Any cycle graph $C$ verifies $\d(C)=L(C)/4$.
\end{corollary}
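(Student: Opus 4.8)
The plan is to deduce the statement directly from Theorem~\ref{t:examplesdelta}, using two elementary facts: that the hyperbolicity constant is an isometry invariant, and that it scales linearly under a global rescaling of the metric. First I would record the scaling property. For a geodesic metric space $(X,d)$ and a number $t>0$, let $tX$ denote the same underlying set equipped with the metric $t\,d$. The identity map is a bijection carrying geodesics to geodesics, since a curve minimizing length for $d$ still minimizes length for $t\,d$; moreover a geodesic triangle $T$ in $X$ is $\d$-thin if and only if the corresponding triangle in $tX$ is $(t\d)$-thin, because all distances in the thinness condition are multiplied by $t$. Taking the supremum over all geodesic triangles then yields $\d(tX)=t\,\d(X)$.

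Next I would observe that, viewed purely as a metric space, a cycle graph depends only on its total length and not on how it is subdivided into edges. Indeed, a cycle graph $C$ with edges of lengths $l_1,\dots,l_m$ is, under the arclength parametrization, isometric to the metric circle of circumference $L(C)=\sum_i l_i$, and in particular to the unit-edge cycle $C_n$ rescaled by the factor $L(C)/n$, for any fixed $n\ge 3$. Since $\d$ is an isometry invariant, it suffices to compute it on this standard model.

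Finally I would combine these facts. The cycle of total length $L(C)$ with $n$ equal edges of length $L(C)/n$ is exactly $\big(L(C)/n\big)\,C_n$, where $C_n$ now denotes the cycle with unit-length edges. By Theorem~\ref{t:examplesdelta} we have $\d(C_n)=n/4$, so the scaling property and isometry invariance give
\[
\d(C)=\d\!\Big(\tfrac{L(C)}{n}\,C_n\Big)=\tfrac{L(C)}{n}\,\d(C_n)=\tfrac{L(C)}{n}\cdot\tfrac{n}{4}=\tfrac{L(C)}{4},
\]
and the value is independent of the chosen $n$, as it must be. There is no real obstacle here, which is why the result is stated as a direct consequence; the only points requiring a moment of care are the verification that rescaling the metric multiplies $\d$ by the scale factor, which I would check directly against the thinness definition, and the isometry between an arbitrary cycle and the rescaled unit-edge model $C_n$. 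Both become transparent once a cycle is regarded simply as a metric circle.
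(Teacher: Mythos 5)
Your proposal is correct and follows essentially the route the paper intends: the paper states the corollary as a direct consequence of Theorem~\ref{t:examplesdelta}, and the implicit deduction is exactly your observation that any cycle graph is isometric to a metric circle of circumference $L(C)$, i.e.\ to a rescaled copy of $C_n$, combined with the facts that $\d$ is an isometry invariant and scales linearly under rescaling of the metric. Your explicit verification of the scaling identity $\d(tX)=t\,\d(X)$ and of the isometry with the standard model is precisely the content hidden behind the words ``direct consequence.''
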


In this work, $PMV(G)$ will denote the set of points of the graph $G$ which are either vertices or midpoints of its edges.

\smallskip

We will use the following result (see \cite[Theorem 2.7]{BRS}).

\begin{theorem}\label{t:DiscreCycle}
For any hyperbolic graph $G$ with edges of length $k$, there exists a geodesic triangle $T = \{x,y,z\}$ that is a cycle with $\d(T)=\d(G)$ and $x,y,z \in PMV(G)$.
\end{theorem}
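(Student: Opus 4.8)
The plan is to reduce the problem to a discrete optimization by combining Corollary~\ref{c:ClosedCurve} with a piecewise-linear analysis of how the thin constant depends on the positions of the three vertices. By Corollary~\ref{c:ClosedCurve} I may write $\d(G)=\sup\{\d(T):T\text{ is a geodesic triangle that is a cycle}\}$, so it suffices to produce a cycle-triangle attaining this supremum with $x,y,z\in PMV(G)$. Thus there are two things to prove: that the supremum is attained, and that an extremal triangle can be chosen with its vertices at vertices or midpoints of $G$.

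First I would show that the supremum is unchanged if we restrict to cycle-triangles whose vertices lie in $PMV(G)$. Given any cycle-triangle $T=\{x,y,z\}$, keep the three edges carrying $x,y,z$ fixed and regard $\d(T)$ as a function of the positions $(t_x,t_y,t_z)\in[0,k]^3$ of the vertices along those edges, where for each position I choose the geodesic sides realizing the largest thin constant (there are finitely many geodesics between two points, since $G$ is locally finite and the relevant lengths are bounded). For a point moving a distance $t$ along an edge $[u,v]$, the distance to a fixed vertex $w$ equals $\min\{t+d(u,w),\,(k-t)+d(v,w)\}$, and the two branches cross at $t=\tfrac12\big(k+d(v,w)-d(u,w)\big)$; since $d(u,w),d(v,w)$ are multiples of $k$ differing by at most $k$, this crossing lies in $\{0,k/2,k\}$. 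Propagating this through the geodesic sides and through the inner maximum over points of $T$, the function $(t_x,t_y,t_z)\mapsto\d(T)$ is continuous and piecewise linear, with the cells of linearity cut out by the half-edge lattice coming from $PMV(G)$. Its maximum over $[0,k]^3$ is therefore attained at a lattice configuration, i.e.\ with $x,y,z\in PMV(G)$, and this maximum is at least $\d(T)$; taking the supremum over all $T$ gives $\d(G)=\sup\{\d(T):T\text{ a cycle},\ x,y,z\in PMV(G)\}$.

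It remains to attain this last supremum. Here I would exploit that, for a cycle-triangle with vertices in $PMV(G)$, the thin constant $\d(T)$ is itself quantized: along each side the distance to the opposite two sides is piecewise linear with slopes $\pm1$ and breakpoints on the half-edge lattice, so its maximum is a nonnegative multiple of $k/4$ (the same mechanism that yields $\d(C)=L(C)/4$ in Corollary~\ref{c:examplesdelta}). Consequently the set of values $\{\d(T):T\text{ a cycle},\ x,y,z\in PMV(G)\}$ lies in the discrete set $\tfrac{k}{4}\,\mathbb{Z}_{\ge0}$. A maximizing sequence of such triangles then has thin constants that converge to $\d(G)$ while taking values in a discrete set, hence is eventually equal to $\d(G)$; any triangle in the sequence past this point is the desired $T=\{x,y,z\}$ with $\d(T)=\d(G)$ and $x,y,z\in PMV(G)$, and a final small perturbation handles any degenerate configuration in which $T$ fails to be a cycle.

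The main obstacle is the piecewise-linearity claim of the second step, and in particular the assertion that the maximum is forced onto the $PMV(G)$ (half-edge) lattice. Two effects are coupled as the vertices move: the combinatorial type of each geodesic side changes, and the location of the extremal point on $T$ jumps, so one must verify that the resulting breakdown into linear cells has all its relevant vertices on the half-edge lattice rather than on a finer subdivision (the crossing of two diagonal branches can a priori produce quarter-edge positions). Ruling this out, or showing that such a configuration shares its thin constant with a genuine $PMV(G)$-configuration on the same maximal face, is exactly where the hypothesis that every edge has length $k$ must be used in full, and it is the heart of the argument.
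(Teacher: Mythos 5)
First, a remark on the comparison itself: the paper does not prove this statement at all --- it is quoted from \cite[Theorem 2.7]{BRS} (``We will use the following result''), so there is no internal proof to measure your argument against; it must stand on its own. Judged that way, it has a genuine gap, and it is exactly the one you flag in your closing paragraph. Your outer skeleton is sound: reducing to cycle triangles via Corollary~\ref{c:ClosedCurve}, and obtaining attainment of the supremum from the quantization $\d(T)\in\tfrac{k}{4}\,\mathbb{Z}_{\ge 0}$ for triangles with corners in $PMV(G)$, is the right mechanism (indeed \cite{BRS} also proves that $\d(G)$ is a multiple of $k/4$). But the load-bearing step --- that the supremum over all cycle triangles equals the supremum over cycle triangles with corners in $PMV(G)$ --- is not established. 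Your argument for it is that $(t_x,t_y,t_z)\mapsto\d(T)$ is piecewise linear with linearity cells ``cut out by the half-edge lattice,'' so that its maximum sits at a lattice configuration. That lattice claim is unjustified, and a priori false: the breakpoints you compute lie in $\{0,k/2,k\}$ only for distances from the moving corner to \emph{fixed} vertices, whereas $\d(T)$ is governed by distances to the other two \emph{moving} corners and to moving points on the opposite sides, which produce walls of the form $t_x\pm t_y=c$; two such diagonal walls meet at quarter-edge points. Concretely, a function such as $f(t_1,t_2)=\min\{\,t_1+t_2,\;k-t_1-t_2,\;t_1-t_2+k/2,\;t_2-t_1+k/2\,\}$ is piecewise linear with slopes $\pm 1$ and walls of exactly this type, yet attains its maximum $k/2$ only at the quarter point $(k/4,k/4)$, while it vanishes at every half-edge lattice point. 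Nothing in your argument excludes that the thin constant behaves this way on some cell, so piecewise linearity alone cannot ``force the maximum onto the lattice.'' Since you yourself concede that ruling this out ``is the heart of the argument,'' what you have is a plan whose central step is missing, not a proof.

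There are also secondary defects in the same step that any completion must handle. As the corners move, each side must remain a geodesic and the three sides must still form a cycle: shrinking a side keeps it geodesic, but the extended side $[x'z]=[x'x]\cup[xz]$ obtained by sliding a corner need not be geodesic (there may be a shortcut avoiding $x$), so your function is not even defined on all of $[0,k]^3$, and its domain has no obvious structure; moreover, defining $\d(T)$ at each position by choosing ``the geodesic sides realizing the largest thin constant'' makes the function a maximum over a position-dependent family, which need not vary continuously. Finally, at a putative lattice maximizer two corners may collide or sides may overlap, so the configuration may fail to be a cycle, and your proposed ``final small perturbation'' would generically move the corners off $PMV(G)$ again, destroying exactly the property you need. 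These are the reasons the argument in \cite{BRS} does not proceed by per-family parametric optimization: given a near-extremal triangle, it \emph{constructs} a new geodesic triangle with corners in $PMV(G)$ (in general of different combinatorial type) whose thin constant has not decreased, and only then invokes the $k/4$-quantization to conclude attainment.
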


\

\section{Inequalities involving the hyperbolicity constant of line graphs.}
We obtain in this section the results on the hyperbolicity constant of a line graph with edges of arbitrary lengths.
The main result in this section is Theorem \ref{t:IneqLineGraph}, which states

\[
\d(G) \le \d(\L(G)) \le 5 \d(G) + 3 l_{max},
\]
with $l_{max}=\sup_{e\in E(G)} L(e)$.

For the sake of clarity and readability, we have opted to state and prove several preliminary lemmas.
This makes the proof of Theorem \ref{t:IneqLineGraph} much more understandable.

Let us consider $Pm(e)$ the midpoint of $e \in E(G)$; also, we denote by $PM(G)$ the set of the midpoints of the edges of $G$, i.e., $PM(G):=\{Pm(e) / e\in E(G)\}$. Besides, let us consider $Pm_\L([V_{e_i},V_{e_j}])$ the point in $[V_{e_i},V_{e_j}] \in E(\L(G))$ with $L( [V_{e_i}Pm_\L([V_{e_i},V_{e_j}])] ) = L(e_i)/2$ (and then $L( [Pm_\L([V_{e_i},V_{e_j}])V_{e_j}] ) = L(e_j)/2$). Analogously, we denote $PM_\L(\L(G))$ the set of these points in each edge of $\L(G)$, i.e., $PM_\L(\L(G)):=\{Pm_\L(e) / e$ $\in E(\L(G))\}$. Note that $Pm_\L([V_{e_i},V_{e_j}])$ is the midpoint of $[V_{e_i},V_{e_j}]$ when $L(e_i)=L(e_j)$; thus, if every edge of $G$ has the same length then $PM_\L(\L(G))$ is the set of midpoints of the edges of $\L(G)$.

Let us consider the sets $PMV(G) := PM(G) \cup V(G)$ and $PM_\L V(\L(G))$ $:= PM_\L(\L(G)) \cup V(\L(G))$.

We define a function $h : PM_\L V(\L(G)) \longrightarrow PMV(G)$ as follows: for every vertex $V_{e}$ of $V(\L(G))$, the image via $h$ of $V_e$ is $Pm(e)$, and for every $Pm_\L([V_{e_i},V_{e_j}])$ in $PM_\L(\L(G))$, the image via $h$ of $Pm_\L ([V_{e_i},V_{e_j}])$ is the vertex $e_i \cap e_j$ in $V(G)$, i.e.,

\begin{equation}\label{eq:QIsometry}
    h(x):=\left\{
            \begin{array}{ll}
              Pm(e), & \hbox{if $x=V_e \in V(\L(G))$,} \\
              e_i \cap e_j, & \hbox{if $x=Pm_\L([V_{e_i},V_{e_j}]) \in PM_\L(\L(G))$.}
            \end{array}
          \right.
\end{equation}

\begin{remark}
If $x\in PM(G)$, then $h^{-1}(x)$ is a single point, but otherwise, $h^{-1}(x)$ can have more than one point.
\end{remark}

   \begin{figure}[h]
     \centering
     \includegraphics[totalheight=5cm]{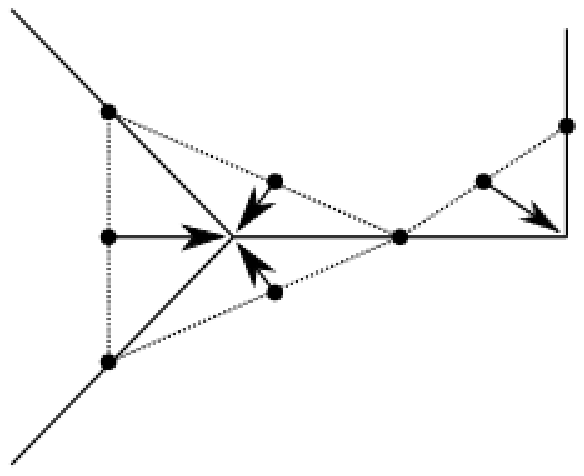}
     \caption{Graphical view of $h$.}
     \label{fig:OutlineH}
   \end{figure}

\medskip

The function $h$ defined in \eqref{eq:QIsometry} can be extended to $\L(G)$.
Note that every point $x_0\in \L(G)\setminus PM_\L V(\L(G))$ is located in $\L(G)$ between one vertex $V_e$ and one point $Pm_\L(V_eV_{e_0})$. For each $x_0\in \inte([V_e Pm_\L([V_eV_{e_0}])])$ we define $h(x_0)$ as the point $x\in \inte[Pm(e) h(Pm_\L([V_eV_{e_0}]))]$ such that $L([xPm(e)]) \ = \ L([x_0V_e])$; \ hence, $\ L([x_0V_e]) \ = \ L([h(x_0)h(V_e)]) \ $ and \newline$L([x_0 Pm_\L(V_eV_{e_0})])=L([h(x_0) h(Pm_\L(V_eV_{e_0}))])$.

In what follows we denote by $h$ this extension.

\smallskip

We call \emph{half-edge} in $G$ a geodesic contained in an edge with an endpoint in $V(G)$ and an endpoint in $PM(G)$;
similarly, a \emph{half-edge} in $\L(G)$ is a geodesic contained in an edge with an endpoint in $V( \L(G) )$ and an endpoint in $PM_\L ( \L(G) )$.

\begin{proposition}\label{p:Lipschitz}
   $h$ is an $1$-Lipschitz continuous function, i.e.,

   \begin{equation}\label{eq:IneqQIso1}
      d_{G}(h(x) , h(y)) \le d_{\L(G)}(x , y) \, , \quad \forall \, x,y \in \L(G).
   \end{equation}
\end{proposition}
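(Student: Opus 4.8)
The plan is to deduce the Lipschitz inequality \eqref{eq:IneqQIso1} from the fact that $h$ does not increase the length of curves. Since $d_G$ is an infimum of lengths of connecting paths, it suffices to fix a geodesic $\gamma=[xy]$ in $\L(G)$ and check that $h\circ\gamma$ is a continuous path in $G$ joining $h(x)$ and $h(y)$ with $L(h\circ\gamma)\le L(\gamma)$; then
\[
d_{G}(h(x),h(y)) \le L(h\circ\gamma) \le L(\gamma) = d_{\L(G)}(x,y),
\]
which is exactly \eqref{eq:IneqQIso1}. Thus the whole argument reduces to two facts about $h$: that it is continuous, and that it is length-preserving on each half-edge of $\L(G)$.

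The key structural observation is that $h$ maps each half-edge of $\L(G)$ isometrically onto a half-edge of $G$. Indeed, fix an edge $[V_e,V_{e_0}]\in E(\L(G))$ and let $v=e\cap e_0\in V(G)$. By the definition of $Pm_\L([V_eV_{e_0}])$, the half-edge $[V_e\,Pm_\L([V_eV_{e_0}])]$ has length $L(e)/2$; its endpoints are sent to $h(V_e)=Pm(e)$ and $h(Pm_\L([V_eV_{e_0}]))=v$, and the extension of $h$ is defined precisely so that $L([h(x_0)Pm(e)])=L([x_0V_e])$ for every interior point $x_0$. Hence $h$ restricts to an isometry of $[V_e\,Pm_\L([V_eV_{e_0}])]$ onto the half-edge $[Pm(e)\,v]$ of $G$, which also has length $L(e)/2$. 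In particular $h$ preserves the length of any curve that stays inside a single half-edge.

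Next I would verify that $h$ is globally continuous, which amounts to checking consistency at the gluing points. At a vertex $V_e\in V(\L(G))$, every incident half-edge is mapped to a half-edge issuing from the common point $Pm(e)=h(V_e)$, so $h$ is continuous there despite the ``folding'' (several half-edges of $\L(G)$ may be sent onto the same half-edge of $G$). At an interior point $Pm_\L([V_eV_{e_0}])$, the two adjacent half-edges $[V_e\,Pm_\L([V_eV_{e_0}])]$ and $[Pm_\L([V_eV_{e_0}])\,V_{e_0}]$ are both mapped so that $Pm_\L([V_eV_{e_0}])\mapsto v=e\cap e_0$, so the two partial definitions agree. Therefore $h$ is continuous on all of $\L(G)$.

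It remains to assemble the pieces. Since $\L(G)$ is locally finite and a geodesic has finite length, $\gamma=[xy]$ meets only finitely many edges; subdividing each traversed edge at its point $Pm_\L$, we write $\gamma$ as a finite concatenation of sub-arcs, each contained in a single half-edge of $\L(G)$. By the structural observation $h$ preserves the length of each sub-arc, and by continuity these images concatenate into the path $h\circ\gamma$ from $h(x)$ to $h(y)$; summing gives $L(h\circ\gamma)=L(\gamma)$, and the displayed chain yields \eqref{eq:IneqQIso1}. I expect the main obstacle to be the bookkeeping behind continuity at the gluing points: one must ensure that all the half-edges of $\L(G)$ incident to $V_e$ (arising from the edges $e_0$ sharing a fixed endpoint of $e$) fold consistently onto the two half-edges of $e$, and that no inconsistency occurs at the points $Pm_\L$; once this is settled the length computation is routine.
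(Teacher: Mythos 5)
Your proof is correct, but it takes a genuinely different route from the paper's. You read off from the definition of the extension of $h$ that it restricts to an arc-length isometry of each half-edge $[V_e\,Pm_\L([V_eV_{e_0}])]$ of $\L(G)$ onto the half-edge of $G$ joining $Pm(e)$ to $e\cap e_0$, you check consistency at the gluing points (the folding of several half-edges of $\L(G)$ onto the same half of $e$ is harmless, since backtracking only adds length, and local finiteness gives the finite subdivision of a geodesic), and you conclude that $h$ maps a geodesic $[xy]$ of $\L(G)$ to a path of the same length, whence $d_G(h(x),h(y))\le L(h([xy]))=d_{\L(G)}(x,y)$. The paper argues instead at the level of distances: it first proves the stronger \emph{equality} \eqref{eq:isometric1}, namely $d_G(x',y')=d_{\L(G)}(h^{-1}(x'),h^{-1}(y'))$ for all $x',y'\in h(\L(G))\cap PMV(G)$, through a three-case analysis (two midpoints; a midpoint and a vertex; two vertices) that combines explicit path constructions with contradiction arguments, and then transfers the inequality to arbitrary points by comparing the formulas \eqref{eq:DistLine} and \eqref{eq:DistG}, which express $d_{\L(G)}(x,y)$ and $d_G(h(x),h(y))$ through the endpoints of the half-edges containing $x$ and $y$. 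Your argument is shorter and more conceptual (a continuous piecewise isometry cannot increase distances in a length space); what the paper's longer computation buys is precisely the equality \eqref{eq:isometric1}, which does \emph{not} follow from the Lipschitz inequality alone and is reused later: Remark \ref{r:hVertex} (the distance equality for vertices of $\L(G)$) rests on it and feeds into Lemma \ref{l:GeodLine}. So if your proof were to replace the paper's, Remark \ref{r:hVertex} would need a separate short argument (essentially the paper's Case 1): lifting a geodesic of $G$ between midpoints to a path of the same length in $\L(G)$.
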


\begin{proof}
First of all note that, by definition of $\L(G)$, we have for every $x^{\prime}$ $\in h(\L(G))\cap PMV(G)$,

$$
    |h^{-1}(x^{\prime})|=\left\{
                  \begin{array}{ll}
                    1, & \hbox{if $x^{\prime}\in PM(G)$,} \\
                    \deg_{G}(x^{\prime})(\deg_{G}(x^{\prime})-1)/2, & \hbox{if $x^{\prime}\in V(G)$.}
                  \end{array}
                \right.
$$

In order to prove \eqref{eq:IneqQIso1}, we verify that
\begin{equation}\label{eq:isometric1}
    d_{G}(x^{\prime} , y^{\prime}) = d_{\L(G)}(h^{-1}(x^{\prime}) , h^{-1}(y^{\prime})) \, , \quad \forall \, x^{\prime},y^{\prime} \in h(\L(G))\cap PMV(G).
\end{equation}

We study separately the different cases of $x^{\prime},y^{\prime} \in h(\L(G))\cap PMV(G)$.
\begin{description}
  \item[Case 1] {$x^{\prime},y^{\prime} \in PM(G)$.}

             Let us consider $x^{\prime}:= Pm(e_i)$ and $y^{\prime}:= Pm(e_j)$ with $e_i,e_j \in E(G)$, and define $d := d_{G} (Pm(e_i) ,$ $Pm(e_j)) \geq 0$.

             If $\ d=0$, then $\ e_i=e_j$, \ so, $\, h^{-1}(Pm(e_i)) \ = \ h^{-1}(Pm(e_j)) \, $ and \newline$d_{\L(G)}(h^{-1}(x^{\prime}),h^{-1}(y^{\prime})) = 0$.

             If $d>0$, then $e_i \neq e_j$ and
             $d_{G}(Pm(e_i),Pm(e_j))=(L(e_i)+L(e_j))/2+d_{G}(e_i,e_j)$.
             Note that, if $d_{G}(e_i,e_j)=0$ and $e_i \neq e_j$, then
             $d_G(x^{\prime},y^{\prime})=(L(e_i)+L(e_j))/2=d_{\L(G)}(V_{e_i},V_{e_j})=d_{\L(G)}(h^{-1}(x^{\prime}),h^{-1}(y^{\prime}))$.
             If $d_G(e_i,e_j)>0$, then a geodesic $\g$ joining $e_i$ and $e_j$ in $G$ contains the edges
             $e_{i_1},\ldots,e_{i_{r}}$ in this order, with $r\geq 1$. Now, we have that
             $d_{G}(e_i,e_j)=\sum_{k=1}^{r}L(e_{i_k})$; hence,
             $V_{e_i}V_{e_{i_1}}\ldots V_{e_{i_r}}V_{e_j}$ is a path joining
             $V_{e_i}$ and $V_{e_j}$ with length $d$. So,
             $d_{\L(G)}(h^{-1}(x^{\prime}),h^{-1}(y^{\prime}))$ $\le d$.

             We prove now that $d_{\L(G)} (h^{-1}(x^{\prime}) , h^{-1}(y^{\prime})) = d$. Seeking for a contradiction, assume that $d_{\L(G)}(h^{-1}(x^{\prime}) , h^{-1}(y^{\prime}))=d_{\L(G)}(V_{e_i},V_{e_j})<d$.
             Hence, there exists $V_{e_{j_1}}, \ldots,V_{e_{j_m}}$ such that
             $V_{e_i}V_{e_{j_1}} \ldots$ $V_{e_{j_m}}V_{e_j}$ is a geodesic in
             $\L(G)$ joining $V_{e_i}$ and $V_{e_j}$ with length
             $(L(e_i)+L(e_j))/2+\sum_{k=1}^{m}L(e_{j_k})<d$. Since $d=(L(e_i)+L(e_j))/2+d_{G}(e_i,e_j)$, we have $\sum_{k=1}^{m}L(e_{j_k}) < d_{G} (e_i , e_j)$.
             By definition of $\L(G)$ we have that
             $\g^*:=e_{j_1}\cup\ldots\cup e_{j_m}$ is a path in $G$
             joining $e_i$ and $e_j$ with length
             $\sum_{k=1}^{m}L(e_{j_k})<d_{G}(e_i,e_j)$. This is the contradiction we were looking for; so we
             have $d_{\L(G)}(h^{-1}(x^{\prime}),h^{-1}(y^{\prime}))=d_{G}(x^{\prime},y^{\prime})$.

  \item[Case 2 ] {$x^{\prime} \in PM(G)$ and $y^{\prime}\in V(G)$.}

             Let us consider $x^{\prime}:= Pm(e)$ with $e\in E(G)$ and $y^{\prime} \in V(G)\setminus\{w\in V(G)/\deg_{G}(w)=1\}$, and define $d := d_{G}(e,y^{\prime})$; then $d_{G} (Pm(e) , v)$ $= d + L(e)/2$. Note that if $y^{\prime}\in V(G)$ and $\deg_G(y^{\prime})=1$, then $y^{\prime}\notin h(\L(G))$.

             If $d=0$, then $y$ is an endpoint of $e$ and $d_{\L(G)}(V_e,h^{-1}(y^{\prime})) = L(e)/2$; note that $|h^{-1}(y^{\prime})|=\deg_{G}(y^{\prime})[\deg_{G}(y^{\prime})-1]/2$, where $|A|$ denotes the cardinality of the set $A$.

             If $d_{G}(e,y^{\prime})=d>0$, then there exist $e_{i_1},\ldots,e_{i_r}\in E(G)$ such that $\g:=e_{i_1}\cup\ldots\cup e_{i_r}$ is a geodesic joining $e$ and $y^{\prime}$ in $G$ with length $d=\sum_{k=1}^{r}L(e_{i_k})$. Note that $e,e_{i_1}$ are different and adjacent edges. So, we have that $V_eV_{e_{i_1}}\ldots V_{e_{i_r}}$ is a path in $\L(G)$ joining $V_e$ and $V_{e_{i_r}}$ with length $L(e)/2 + \sum_{k=1}^{r}L(e_{i_k}) - L(e_{i_r})/2$. Since $y^{\prime}$ is an endpoint of $e_{i_r}$, we have $d_{\L(G)}(h^{-1}(y^{\prime}), V_{e_{i_r}})=L(e_{i_r})/2$ and $d_{\L(G)}(h^{-1}(y^{\prime}), V_{e}) \le d + L(e)/2$.

             We prove now that $d_{\L(G)}(h^{-1}(y^{\prime}), V_{e}) = d + L(e)/2$. Seeking for a contradiction, assume that
             $d_{\L(G)}(h^{-1}(y^{\prime}), V_{e}) < d + L(e)/2$.
             Hence, there exists $V_{e_{j_1}},\ldots,V_{e_{j_m}}$ such that
             $V_{e}V_{e_{j_1}}\ldots V_{e_{j_m}}\cup [V_{e_{j_m}}z]$ is a
             geodesic of $\L(G)$ joining $V_{e}$ and $z\in h^{-1}(y^{\prime})$ with
             length $L(e)/2+\sum_{k=1}^{m}L(e_{j_k})< d + L(e)/2$. We have
             $z=Pm_{\L}([V_{e_{j_m}},V_{e_s}])$ with $e_{j_m},e_s$ edges in $G$ starting in $y^{\prime}$.
             By definition of $\L(G)$ we have that
             $\g^*:=e_{j_1}\cup\ldots\cup e_{j_m}$ contains a path in $G$
             joining $e$ and $y^{\prime}$ with length at most
             $\sum_{k=1}^{m} L(e_{j_k})<d$. This is the contradiction we were looking for; so we
             have $d_{\L(G)}(h^{-1}(x^{\prime}),h^{-1}(y^{\prime}))=d_{G}(x^{\prime},y^{\prime})$.

  \item[Case 3] {$x^{\prime},y^{\prime} \in V(G)$.}

             Let us consider $x^{\prime},y^{\prime} \in V(G)\setminus\{v\in V(G)/\deg_{G}(v)=1\}$, and define $d:=d_{G} (x^{\prime} , y^{\prime}) \geq 0$.

             If $d=0$, then $x^{\prime}=y^{\prime}$, so $d_{\L(G)}(h^{-1}(x^{\prime}),h^{-1}(y^{\prime}))=0$.

             If $d_{G}(x^{\prime},y^{\prime})=d>0$, then there exists $e_{i_1},\ldots,e_{i_r}\in E(G)$ such that $\g:=e_{i_1}\cup\ldots\cup e_{i_r}$ is a geodesic joining $x^{\prime}$ and $y^{\prime}$ in $G$ with length $d=\sum_{k=1}^{r}L(e_{i_k})$. So, we have that there exist $a\in h^{-1}(x^{\prime})$ and $b\in h^{-1}(y^{\prime})$ such that $[a V_{e_{i_1}}]\cup V_{e_{i_1}}\ldots V_{e_{i_r}}\cup [ V_{e_{i_r}}b]$ is a path in $\L(G)$ joining $a$ and $b$ with length $\sum_{k=1}^{r} L(e_{i_k}) = d$. Then, we have that $d_{\L(G)}(h^{-1}(x^{\prime}), h^{-1}(y^{\prime})) \le d$.

             We prove now that $d_{\L(G)}(h^{-1}(x^{\prime}), h^{-1}(y^{\prime})) = d$. Seeking for a contradiction, assume that
             $d_{\L(G)}(h^{-1}(x^{\prime}), h^{-1}(y^{\prime})) < d$.
             Hence, there exist $\a\in h^{-1}(x^{\prime})$, $\b\in h^{-1}(y^{\prime})$ and
             $V_{e_{j_1}},\ldots,V_{e_{j_m}}$ vertices in $\L(G)$ such that
             $[\a V_{e_{j_1}}]\cup V_{e_{j_1}}\ldots
             V_{e_{j_m}}\cup[V_{e_{j_m}}\b]$ is a
             geodesic joining $\a$ and $\b$ in $\L(G)$ with
             length $\sum_{k=1}^{m} L(e_{j_k})< d$. We have
             $\a=Pm_\L([V_{e_s^1},V_{e_{j_1}}])$ with $e_{j_1},e_s^1$ edges in $G$ starting in $x^{\prime}$, and
             $\b=Pm_\L([V_{e_{j_m}},V_{e_s^2}])$ with $e_{j_m},e_s^2$ edges in $G$ starting in $y^{\prime}$.
             By definition of $\L(G)$ we have that
             $\g^*:=e_{j_1}\cup\ldots\cup e_{j_m}$ contains a path in $G$ joining $x^{\prime}$ and $y^{\prime}$ with length at most
             $\sum_{k=1}^{m} L(e_{j_k})<d$. This is the contradiction we were looking for; so we
             have $d_{\L(G)}(h^{-1}(x^{\prime}),h^{-1}(y^{\prime}))=d_{G}(x^{\prime},y^{\prime})$.
\end{description}

This prove \eqref{eq:isometric1} and guarantees \eqref{eq:IneqQIso1} for $x,y \in PM_\L V(\L(G))$ when we take $x:= h(x^{\prime})$ and $y:= h(y^{\prime})$.
We know that there exist $X_1,X_2,Y_1,Y_2 \in PM_\L V(\L(G))$ with $x \in [X_1 , X_2]$ and $y \in [Y_1 , Y_2]$ such that $d_{\L(G)}(x,X_1)=\e_x$, $d_{\L(G)}(x,X_2)=\d_x$, $d_{\L(G)}(y,Y_1)=\e_y$, $d_{\L(G)}(y,Y_2)=\d_y$ and $[X_1X_2]$, $[Y_1Y_2]$ are two half-edges in $\L(G)$. Hence, we have $h(x) \in [h(X_1)h(X_2)]$, $h(y) \in [h(Y_1)h(Y_2)]$ with $d_{G}(h(x),h(X_1)) = \e_x$, $d_{G}(h(x),h(X_2))=\d_x$, $d_{G}(h(y),h(Y_1))=\e_y$ and $d_{G}(h(y),h(Y_2))=\d_y$; besides $[h(X_1)h(X_2)]$ and $[h(Y_1)h(Y_2)]$ are two half-edges in $G$.

Note that if $[X_1X_2] = [Y_1Y_2]$, then $d_{G}(h(x) , h(y)) = d_{\L(G)}(x , y)$. Otherwise, we have

\begin{equation}\label{eq:DistLine}
d_{\L(G)}(x , y) = \min \left\{
                \begin{array}{ll}
                  d_{\L(G)}(X_1 , Y_1) + \e_x + \e_y, \\
                  d_{\L(G)}(X_1 , Y_2) + \e_x + \d_y, \\
                  d_{\L(G)}(X_2 , Y_1) + \d_x + \e_y, \\
                  d_{\L(G)}(X_2 , Y_2) + \d_x + \d_y
                \end{array}
              \right\}
\end{equation}

and

\begin{equation}\label{eq:DistG}
d_{G}(h(x) , h(y)) = \min \left\{
                \begin{array}{ll}
                  d_{G}(h(X_1) , h(Y_1)) + \e_x + \e_y, \\
                  d_{G}(h(X_1) , h(Y_2)) + \e_x + \d_y, \\
                  d_{G}(h(X_2) , h(Y_1)) + \d_x + \e_y, \\
                  d_{G}(h(X_2) , h(Y_2)) + \d_x + \d_y
                \end{array}
              \right\}.
\end{equation}

Let us consider $X_i,Y_j$ with $i,j\in\{1,2\}$, $\a \in \{\e_x,\d_x\}$ and $\b \in \{\e_y,\d_y\}$ such that $d_{\L(G)}(x , y) = d_{\L(G)}(X_i , Y_j) + \a + \b$. Hence, by \eqref{eq:isometric1} we have

$$
\begin{aligned}
d_{\L(G)}(x , y) &= d_{\L(G)}(X_i , Y_j) + \a + \b,\\
&\geq d_{G}(h(X_i) , h(Y_j)) + \a + \b, \\
&\geq d_{G}(h(x) , h(y)).
\end{aligned}
$$
\end{proof}

%
%\begin{lemma}\label{l:isometric}
%For every $x,y \in h(\L(G))$, we have
%\begin{equation}%\label{eq:isometric1}
%    d_{G}(x , y) = d_{\L(G)}(h^{-1}(x) , h^{-1}(y)),
%\end{equation}
%where $h$ is defined by \eqref{eq:QIsometry}.
%
%Furthermore, for every $x^\prime,y^\prime \in PM_\L V(\L(G))$ we have
%
%\begin{equation}\label{eq:isometric2}
%   d_{G}(h(x^\prime),h(y^\prime)) \le d_{\L(G)}(x^\prime,y^\prime) \le d_{G}(h(x^\prime),h(y^\prime)) + (\mathds{1}_{PM_\L(\L(G))}(x) + \mathds{1}_{PM_\L(\L(G))}(y))  \sup_{e\in E(G)} L(e),
%\end{equation}
%where $\mathds{1}_{X}$ be the indicator function of the set $X$.
%\end{lemma}

The following result is a consequence of Proposition \ref{p:Lipschitz}.

\begin{remark}\label{r:hVertex}
Let $x$ and $y$ be in $V(\L(G))$, then we have that
\begin{equation*}
    d_{\L(G)}(x , y) = d_{G}(h(x) , h(y)).
\end{equation*}
\end{remark}

\medskip

We also have a kind of reciprocal of Proposition \ref{p:Lipschitz}.

\begin{lemma}\label{t:lineQIso}
For every $x,y \in \L(G)$ we have
\begin{equation}\label{eq:IneqQIso2}
    d_{\L(G)}(x , y) \le  d_{G}(h(x) , h(y)) + 2 l_{max},
\end{equation}
where $l_{max}:= \sup_{e\in E(G)} L(e)$.
\end{lemma}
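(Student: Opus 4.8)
The plan is to deduce \eqref{eq:IneqQIso2} from the \emph{exact} isometry that $h$ realizes between genuine vertices of $\L(G)$, namely Remark \ref{r:hVertex}, together with short correction terms coming from half-edges. The guiding observation is that, although $h^{-1}$ of a vertex of $G$ is a whole clique (so the $\L(G)$-distance between two arbitrary representatives need not be controlled by the distance of their images), $h^{-1}$ of a midpoint $Pm(e)$ is the single vertex $V_e$; hence between vertices of $\L(G)$ the map $h$ preserves distances exactly. Thus I would first push the arbitrary points $x,y\in\L(G)$ to nearby \emph{vertices} of $\L(G)$, measure there using Remark \ref{r:hVertex}, and then pull the estimate back to $h(x),h(y)$ in $G$.

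Concretely, every point $x\in\L(G)$ lies on a half-edge of $\L(G)$ whose vertex endpoint is some $V_{e}\in V(\L(G))$, and that half-edge has length $L(e)/2\le l_{max}/2$; hence $d_{\L(G)}(x,V_{e})\le l_{max}/2$. By the way $h$ was extended (it preserves the arc length measured from the vertex endpoint, and the relevant half of $e$ is a geodesic of $G$ of length $L(e)/2$), one also gets $d_{G}(h(x),Pm(e))=d_{G}(h(x),h(V_e))\le l_{max}/2$. Choosing a vertex $V_{f}$ for $y$ in the same manner, the triangle inequality in $\L(G)$ gives
\[
 d_{\L(G)}(x,y)\le d_{\L(G)}(x,V_{e})+d_{\L(G)}(V_{e},V_{f})+d_{\L(G)}(V_{f},y),
\]
and the crucial middle term is settled by Remark \ref{r:hVertex}, which yields exactly $d_{\L(G)}(V_{e},V_{f})=d_{G}(Pm(e),Pm(f))$.

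Finally I would bound $d_{G}(Pm(e),Pm(f))$ by the triangle inequality in $G$ routed through $h(x)$ and $h(y)$:
\[
 d_{G}(Pm(e),Pm(f))\le d_{G}(Pm(e),h(x))+d_{G}(h(x),h(y))+d_{G}(h(y),Pm(f)).
\]
Substituting the four correction estimates (each at most $l_{max}/2$) then collapses everything to $d_{G}(h(x),h(y))+2\,l_{max}$, which is precisely \eqref{eq:IneqQIso2}. The point requiring care — and the main obstacle to a naive argument — is exactly why one must route through the vertices $V_e$ rather than through the points $Pm_\L$: for the latter the preimage is a clique on which $d_{\L(G)}$ is \emph{not} determined by the images under $h$, so Remark \ref{r:hVertex} would be unavailable; passing through $V_e$ (whose image $Pm(e)$ has a single preimage) is what turns the middle term into an equality and keeps the total additive loss at the two half-edge corrections per side, i.e.\ $2\,l_{max}$.
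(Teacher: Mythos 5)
Your proof is correct, and it takes a genuinely different route from the paper's. The paper argues in two stages: it first proves \eqref{eq:IneqQIso2} for $x,y\in PM_\L V(\L(G))$ by choosing representatives $x'',y''$ of the preimage sets $h^{-1}(h(x))$, $h^{-1}(h(y))$ at minimal distance, where \eqref{eq:isometric1} gives $d_{\L(G)}(x'',y'')=d_{G}(h(x),h(y))$ exactly, and then pays $l_{max}$ on each side to travel inside the cliques $K_{\deg(v)}$ from $x,y$ to the vertices of the geodesic $[x''y'']$ nearest to its endpoints; afterwards it extends to arbitrary points of $\L(G)$ through the half-edge decomposition formulas \eqref{eq:DistLine} and \eqref{eq:DistG}. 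You instead treat arbitrary $x,y\in\L(G)$ in a single pass: project $x,y$ to the vertex endpoints $V_e,V_f$ of half-edges containing them (cost at most $l_{max}/2$ each, both in $\L(G)$ and, via the arc-length-preserving extension of $h$, in $G$), use Remark \ref{r:hVertex} to turn the middle term into the exact equality $d_{\L(G)}(V_e,V_f)=d_{G}(Pm(e),Pm(f))$, and close with the triangle inequality in $G$ routed through $h(x)$ and $h(y)$; the four corrections of $l_{max}/2$ add up to precisely $2l_{max}$. Your diagnosis of the obstacle --- that preimages under $h$ of vertices of $G$ are cliques on which $d_{\L(G)}$ is not controlled by the images --- is exactly the difficulty the paper resolves by optimizing over clique representatives, whereas your detour through $V_e$ and $V_f$ sidesteps it entirely. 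One cosmetic point: you do not actually need the parenthetical claim that the relevant half of $e$ is a geodesic of $G$ (though it is true); the bound $d_{G}(h(x),Pm(e))\le l_{max}/2$ already follows from the arc-length bound $L(e)/2$. On balance your argument is more elementary, avoids the two-stage structure, and yields the same constant; the paper's version buys nothing extra here beyond running parallel to the proof scheme of Proposition \ref{p:Lipschitz}.
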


\begin{proof}
First of all, we prove \eqref{eq:IneqQIso2} for $x,y \in PM_\L V(\L(G))$. In order to prove it, we can assume that $\diam_{\L(G)}h^{-1}(h(x)), \diam_{\L(G)}h^{-1}(h(y)) > 0$ (i.e., $h(x),h(y) \in V(G)$ and $\deg_G(h(x)), \deg_G(h(x)) > 2$), since otherwise the argument is easier. Thus, by definition of $\L(G)$ we have a complete subgraph $K_{\deg(v)}$ associated to $v \in V(G)$ and $h^{-1}(v)=PM_\L(\L(G))\cap K_{\deg(v)}$.
Let us choose $x^{\prime\prime} \in h^{-1}(h(x))$, $y^{\prime\prime} \in h^{-1}(h(y))$ with $d_{\L(G)}(x^{\prime\prime} , y^{\prime\prime}) = d_{\L(G)}(h^{-1}(h(x)) , h^{-1}(h(y)))$. Consider a geodesic $\g$ joining $x^{\prime\prime}$ and $y^{\prime\prime}$ in $\L(G)$. Let $V_1$ (respectively, $V_2$) be the closest vertex to $x^{\prime\prime}$ (respectively, $y^{\prime\prime}$) in $\g$. It is easy to check that, since $h^{-1}(v) = PM_\L(\L(G)) \cap K_{\deg(v)}$ and $L([V_{e_i},V_{e_j}]) = (L(e_i) + L(e_j)) /2$, we have

\[
\displaystyle d_{\L(G)}(V_1 , x) \le d_{\L(G)}(V_1 , x^{\prime\prime}) + \sup_{e\in E(G)} L(e),
\]

\[
\displaystyle d_{\L(G)}(V_2 , y) \le d_{\L(G)}(V_2 , y^{\prime\prime}) + \sup_{e\in E(G)} L(e),
\]
and since

\[
d_{\L(G)}(x^{\prime\prime} , V_1) + d_{\L(G)}(V_1 , V_2) + d_{\L(G)}(V_2 , y^{\prime\prime}) = d_{\L(G)}(x^{\prime\prime} , y^{\prime\prime}),
\]
we deduce \eqref{eq:IneqQIso2} for $x,y \in PM_\L V(\L(G))$.

Now, let us consider $X_{i^\prime},Y_{j^\prime}$ with $i^\prime,j^\prime \in\{1,2\}$, $\a^\prime \in \{\e_x,\d_x\}$ and $\b^\prime \in \{\e_y,\d_y\}$ such that $d_{G}(h(x) , h(y)) = d_{G}(h(X_{i^\prime}) , h(Y_{j^\prime})) + \a^\prime + \b^\prime$. Hence, we have

$$
\begin{aligned}
d_{G}(h(x) , h(y)) &= d_{G}(h(X_{i^\prime}) , h(Y_{j^\prime})) + \a^\prime + \b^\prime, \\
& \geq d_{\L(G)}(X_{i^\prime} , Y_{j^\prime}) - 2 l_{max} + \a^\prime + \b^\prime,
\end{aligned}
$$
finally, \eqref{eq:DistLine} gives the condition.
\end{proof}

\medskip

It is easy to see that $G \setminus h(\L(G))$ is the union of the half-edges of $G$ such that one of its vertices has degree $1$; thus the following fact holds.

\begin{remark}\label{r:QuasiIsom}
$h$ is a $(l_{max}/2)$-full $(1,2l_{max})$-quasi-isometry with $l_{max}=\sup_{e\in E(G)} L(e)$.
\end{remark}

\medskip

Now, let us consider a cycle $C$ in $G$. We define $g_C: C \longrightarrow \L(G)$ in the following way; $g_C(Pm(e)):=V_e$ for $e\in E(G)\cap C$;
if $C^*$ is the cycle in $\L(G)$ with vertices $\cup_{e\in E(G)} g_C(Pm(e))$, then one can check that
$h|_{C^*}: C^* \longrightarrow C$
is a bijection;
we define
\begin{equation}\label{eq:SimpleCycle}
g_C:= (h|_C^*)^{-1}: C \longrightarrow C^*.
\end{equation}

\begin{corollary}\label{c:TriangGeod}
Let $C$ be a geodesic polygon in a graph $G$ that is a cycle and let $g_C$ be the function
defined by \eqref{eq:SimpleCycle}. Then, $C^*:=g_C(C)$ is a geodesic polygon in $\L(G)$ with the same number of edges than $C$.

Furthermore, if $\g$ is a geodesic in $C$, then $g_C(\g)$ is a geodesic in $\L(G)$ with $L(g_C(\g))=L(\g)$.
\end{corollary}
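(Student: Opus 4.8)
The plan is to lean on two facts already in hand: that $h$ preserves arc length along half-edges (immediate from its construction as the extension of \eqref{eq:QIsometry}), and that $h$ is $1$-Lipschitz (Proposition \ref{p:Lipschitz}). The first shows $g_C$ does not distort lengths; the second, combined with the identity $h\circ g_C=\mathrm{id}_C$, is what forces the images of geodesics to remain geodesics in the ambient space $\L(G)$, not merely inside the cycle $C^*$.

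First I would record the arc-length statement. Writing the edges of $C$ in cyclic order as $e_1,\dots,e_n$ and setting $v_i=e_i\cap e_{i+1}$, the map $h$ sends the half-edge $[V_{e_i}Pm_\L([V_{e_i},V_{e_{i+1}}])]$ isometrically onto $[Pm(e_i)\,v_i]$ and the half-edge $[Pm_\L([V_{e_i},V_{e_{i+1}}])V_{e_{i+1}}]$ isometrically onto $[v_i\,Pm(e_{i+1})]$; the $\L(G)$-edge and its image both have length $(L(e_i)+L(e_{i+1}))/2$. Hence $h|_{C^*}$ preserves arc length, and so does its inverse $g_C$. In particular $C$ and $C^*$ have the same length and the same number of edges, and $L(g_C(\g))=L(\g)$ for every subarc $\g$ of $C$; this already delivers the length equality asserted in the ``Furthermore'' part.

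Next I would show that $g_C$ carries geodesics to geodesics. Let $\g$ be a geodesic of $G$ contained in $C$, with endpoints $p,q$, so that $L(\g)=d_G(p,q)$. The path $g_C(\g)$ joins $g_C(p)$ and $g_C(q)$ in $\L(G)$, whence $d_{\L(G)}(g_C(p),g_C(q))\le L(g_C(\g))=L(\g)=d_G(p,q)$. For the reverse inequality I would apply Proposition \ref{p:Lipschitz} to the points $g_C(p),g_C(q)$ and use $h\circ g_C=\mathrm{id}_C$, obtaining $d_G(p,q)=d_G(h(g_C(p)),h(g_C(q)))\le d_{\L(G)}(g_C(p),g_C(q))$. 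The two estimates force $d_{\L(G)}(g_C(p),g_C(q))=L(g_C(\g))$, so $g_C(\g)$ is a geodesic of $\L(G)$. Applying this to each geodesic side of the polygon $C$ shows that $C^*=g_C(C)$ is a geodesic polygon with the same number of sides, and by the previous paragraph the same number of edges, as $C$; this is the first assertion.

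The main obstacle, and the only genuinely nontrivial point, is the reverse inequality: a priori $g_C(\g)$ is only a path inside the cycle $C^*$, and one must exclude the possibility of a shorter connecting route through the rest of $\L(G)$. The mechanism is that the $1$-Lipschitz retraction $h$ back onto $G$ converts any such competitor into a path in $G$ of no greater length between $p=h(g_C(p))$ and $q=h(g_C(q))$; since $\g$ already realizes $d_G(p,q)$, no shortcut can exist. Everything else—the arc-length bookkeeping and the counting of sides and edges—is immediate from the definitions.
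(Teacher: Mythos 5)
Your proof is correct and follows essentially the same route as the paper's: the upper bound comes from the image path $g_C(\g)$ (the paper phrases this via $d_{\L(G)}(g_C(x),g_C(y))\le d_{C^*}(g_C(x),g_C(y))=d_G(x,y)$, using the same arc-length preservation of $g_C$ you establish), and the lower bound comes from Proposition \ref{p:Lipschitz} together with $h\circ g_C=\mathrm{id}_C$, exactly as you argue. Your write-up is somewhat more explicit about the half-edge isometries and the edge count, but there is no substantive difference in method.
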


\begin{proof}
First of all, note that $L(C)=L(C^*)$ since if $E(C)=\{e_1,\ldots,e_n\}$ with $e_1 \cap e_n \neq \varnothing$ and $e_i \cap e_{i+1} \neq \varnothing$ for $1 \le i < n$, then $L(C)=\displaystyle\sum_{i=1}^{n} L(e_i)$ and $L(C^*)=L(e_1)/2 + \displaystyle\sum_{i=1}^{n-1} (L(e_i) + L(e_{i+1}))/2 + L(e_n)/2$.

Now, let us consider a geodesic $\g$ in $C$ joining $x$ and $y$. Since $g_C(\g)$ is a path joining $g_C(x)$ and $g_C(y)$, we have that $d_{\L(G)}(g_C(x) , g_C(y)) \le d_{C^*}(g_C(x) , g_C(y)) = d_{G}(x , y)$;
Proposition \ref{p:Lipschitz} gives $d_{\L(G)}(g_C(x) , g_C(y)) \geq d_{G}(h(g_C(x)) , h(g_C(y)))=d_{G}(x , y)$. Then we obtain that

\[
d_{\L(G)}(g_C(x) , g_C(y)) = d_{G}(x , y).
\]

Since $\g$ is an arbitrary geodesic in $C$ we obtain that $g_C$ maps geodesics in $G$ (contained in $C$) in geodesics in $\L(G)$ (contained in $C^*$).
\end{proof}

Now, we deal with the geodesics in $\L(G)$.

\begin{lemma}\label{l:GeodLine}
Let $\g^*$ be a geodesic joining $x$ and $y$ in $\L(G)$. Then $h(\g^*)$ is a path in $G$ joining $h(x)$ and $h(y)$, which is the union of three geodesics $\g_1$, $\g_2$, $\g_3$ in $G$, with $h(x) \in \g_1$, $h(y) \in \g_3$ and $0 \le L(\g_1),L(\g_3) < \sup_{e\in E(G)} L(e)$.
\end{lemma}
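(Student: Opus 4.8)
The plan is to exploit the continuity of $h$ together with the fact, recorded in Remark \ref{r:hVertex}, that $h$ restricts to an \emph{isometry} on $V(\L(G))$. Since $h$ is $1$-Lipschitz (Proposition \ref{p:Lipschitz}) it is continuous, so $h(\g^*)$ is a curve in $G$ joining $h(x)$ and $h(y)$; what remains is to cut it into three geodesics with short end pieces. Let $V_1$ and $V_2$ be, respectively, the first and the last vertex of $\L(G)$ that $\g^*$ meets travelling from $x$ to $y$. If $\g^*$ meets no vertex of $\L(G)$ then $x$ and $y$ lie on a common edge of $\L(G)$ and the statement is immediate, so I assume $V_1,V_2$ exist. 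Writing $\g^*=[xV_1]\cup[V_1V_2]\cup[V_2y]$, I set $\g_1:=h([xV_1])$, $\g_2:=h([V_1V_2])$ and $\g_3:=h([V_2y])$, so that $h(\g^*)=\g_1\cup\g_2\cup\g_3$ with $h(x)\in\g_1$ and $h(y)\in\g_3$.

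For the middle piece, $[V_1V_2]$ is a subarc of the geodesic $\g^*$, hence itself a geodesic of $\L(G)$, and its endpoints lie in $V(\L(G))$; thus Remark \ref{r:hVertex} gives $d_{\L(G)}(V_1,V_2)=d_{G}(h(V_1),h(V_2))$. Since $h$ is $1$-Lipschitz, $L(\g_2)\le L([V_1V_2])=d_{\L(G)}(V_1,V_2)=d_{G}(h(V_1),h(V_2))$; as $\g_2$ joins $h(V_1)$ and $h(V_2)$ the reverse inequality holds as well, whence $L(\g_2)=d_{G}(h(V_1),h(V_2))$ and $\g_2$ is a geodesic of $G$.

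The end pieces require the only real work. By the choice of $V_1$ the subarc $[xV_1]$ contains no interior vertex of $\L(G)$, hence lies inside a single edge $[V_{e_a}V_{e_b}]$ with $V_1=V_{e_a}$; consequently $L(\g_1)=L([xV_1])<(L(e_a)+L(e_b))/2\le l_{max}$, the inequality being strict because $x$ is not the opposite endpoint $V_{e_b}$ (if $x$ is itself a vertex then $V_1=x$ and $\g_1$ is a point). To see that $\g_1$ is a geodesic I distinguish whether $h([xV_1])$ stays inside one edge of $G$ or not. If $x$ lies on the $e_a$-side of $[V_{e_a}V_{e_b}]$, then $\g_1=[h(x),Pm(e_a)]$ is a subarc of $e_a$ and is trivially geodesic. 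Otherwise $h([xV_1])$ runs from $h(x)$, an interior point of $e_b$, to the common vertex $v=e_a\cap e_b$, and then along the half of $e_a$ to $Pm(e_a)$; here I must check that the turn at $v$ is geodesic, i.e. that $d_{G}(h(x),Pm(e_a))=d_{G}(h(x),v)+L(e_a)/2$. This is the heart of the matter: writing $v'$ for the other endpoint of $e_a$, the midpoint $Pm(e_a)$ can only be reached through $v$ or through $v'$, so it suffices to prove $d_{G}(h(x),v)\le d_{G}(h(x),v')$. But any path from $h(x)$ to $v'$ must leave $e_b$ through $v$ or through its other endpoint $w$; in the first case its length is at least $d_{G}(h(x),v)$, and in the second at least $L(e_b)-d_{G}(h(x),v)$, which exceeds $d_{G}(h(x),v)$ because $h(x)$ lies in the half of $e_b$ closer to $v$. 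Hence $v$ is the nearer endpoint and $\g_1$ is geodesic; the argument for $\g_3$ is identical.

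I expect this last paragraph to be the main obstacle, precisely because $h(\g^*)$ need not be a simple arc: when two consecutive edges of $\g^*$ correspond to edges of $G$ sharing the \emph{same} vertex, $h(\g^*)$ folds back on a half-edge, so one cannot argue that $h(\g^*)$ is globally a geodesic. The point of decomposing exactly at the first and last vertices $V_1,V_2$ is that any such fold is confined to where $\g_1$ meets $\g_2$ and where $\g_2$ meets $\g_3$, leaving $\g_2$ a genuine vertex-to-vertex geodesic and reducing the analysis of $\g_1$ and $\g_3$ to the single nearest-endpoint estimate above. The degenerate situations, namely $\g^*$ meeting no vertex of $\L(G)$, or meeting exactly one so that $V_1=V_2$ and $\g_2$ is a point, are handled by the same estimates with a trivial middle piece.
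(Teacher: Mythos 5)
Your proof is correct and follows essentially the same route as the paper: the paper likewise splits $\g^*$ at the vertices of $\L(G)$ nearest to $x$ and $y$, uses Remark \ref{r:hVertex} to see that the middle vertex-to-vertex piece maps onto a geodesic of $G$, and notes that each end piece lies in a single edge of $\L(G)$, whose length is at most $l_{max}$. The only difference is one of detail: where you verify geodesy of the end pieces by the nearest-endpoint estimate, the paper deduces it from the fact (established in its first paragraph via Remark \ref{r:hVertex}) that $h$ maps every edge of $\L(G)$ onto a geodesic of $G$, so your third paragraph essentially re-proves a special case of that observation.
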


\begin{proof}
Note that if $x,y$ are contained in one edge $[V_1,V_2]$ of $\L(G)$, then $\g^* \subset [V_1,V_2]$ and $h(\g^*)$ is a geodesic in $G$ joining $h(x)$ and $h(y)$, since $h(\g^*) \subset \g:= [h(V_1) h(Pm_\L([V_1,V_2]))]\cup[h(Pm_\L([V_1,V_2])) h(V_2)]$ and $\g$ is a geodesic in $G$ by Remark \ref{r:hVertex}.

If $x,y$ are not contained in the same edge of $\L(G)$, then let us consider $V_\a$ as the closest vertex in $\g^*$ to $\a$, for $\a \in \{x,y\}$ (it is possible to have $V_x=x$ or $V_y=y$). By Remark \ref{r:hVertex}, we have that $h([V_x V_y])=[h(V_x) h(V_y)]$ is a geodesic joining $h(V_x)$ and $h(V_y)$ in $G$; moreover, $h(\g^*) = [h(x) h(V_x)] \cup [h(V_x) h(V_y)] \cup [h(V_y) h(y)]$ where $[h(x) h(V_x)]$ and $[h(V_y) h(y)]$ are geodesics in $G$ since $x, V_x$ (respectively $y, V_y$) are contained in the same edge of $\L(G)$. This finishes the proof, since $L(e^*)\le \sup_{e\in E(G)} L(e)$ for every $e^* \in E(\L(G))$.
\end{proof}

The arguments in the proof of Lemma \ref{l:GeodLine} give the following result.

\begin{lemma}\label{l:GeodLinek}
Let $G$ be a graph with edges of length $k$ and $\g^*$ be a geodesic of $\L(G)$ joining $x$ and $y$ with $x,y \in PM_\L V(\L(G))$. Then $h(\g^*)$ is the union of three geodesics $\g_1^*$, $\g_2^*$, $\g_3^*$ in $G$ with $h(x) \in \g_1^*$, $h(y) \in \g_3^*$ and $0 \le L(\g_1^*),L(\g_3^*) \le k/2$.
\end{lemma}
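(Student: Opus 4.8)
The plan is to specialize the proof of Lemma \ref{l:GeodLine} to the case of edges of uniform length $k$, tracking how the map $h$ acts on the two ``end pieces'' of the geodesic $\g^*$. Since $G$ has all edges of length $k$, every edge of $\L(G)$ also has length $k$, and the point $Pm_\L(e^*)$ is precisely the midpoint of each edge $e^*\in E(\L(G))$; thus $PM_\L V(\L(G))$ consists of vertices and midpoints of $\L(G)$. The hypothesis $x,y\in PM_\L V(\L(G))$ is exactly what will sharpen the bound $L(\g_1),L(\g_3)<\sup_e L(e)$ from the strict inequality in Lemma \ref{l:GeodLine} to the tight bound $\le k/2$.

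First I would reduce to the nondegenerate case exactly as in Lemma \ref{l:GeodLine}: if $x,y$ lie in a single edge $[V_1,V_2]$ of $\L(G)$, then $\g^*\subset[V_1,V_2]$ and $h(\g^*)$ is a single geodesic, so the decomposition is trivial. Otherwise, I would pick $V_x$ (resp.\ $V_y$) to be the vertex of $\g^*$ closest to $x$ (resp.\ to $y$), and invoke Remark \ref{r:hVertex} to conclude that $h([V_xV_y])=[h(V_x)h(V_y)]$ is a geodesic in $G$. Then I set $\g_2^*:=[h(V_x)h(V_y)]$, $\g_1^*:=[h(x)h(V_x)]$ and $\g_3^*:=[h(V_y)h(y)]$, and the decomposition $h(\g^*)=\g_1^*\cup\g_2^*\cup\g_3^*$ follows as before, with $\g_1^*,\g_3^*$ being geodesics because $x,V_x$ (resp.\ $y,V_y$) lie in a common edge of $\L(G)$.

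The only new point — and the place where the uniform length $k$ and the hypothesis $x,y\in PM_\L V(\L(G))$ are used — is the bound $L(\g_1^*),L(\g_3^*)\le k/2$. Since $h$ preserves length along each half-edge by construction, I have $L(\g_1^*)=L([h(x)h(V_x)])=d_{\L(G)}(x,V_x)$, the distance from $x$ to the nearest vertex of $\g^*$ along the edge containing it. Because every edge of $\L(G)$ has length $k$ and $x$ is either a vertex of $\L(G)$ (whence $V_x=x$ and $L(\g_1^*)=0$) or a midpoint $Pm_\L(e^*)$ of some edge (whence $x$ is at distance exactly $k/2$ from each endpoint of that edge, so $d_{\L(G)}(x,V_x)\le k/2$), I obtain $0\le L(\g_1^*)\le k/2$, and symmetrically for $\g_3^*$. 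This is the step I expect to carry the weight: in Lemma \ref{l:GeodLine} one only controls the end pieces by the full edge length $\sup_e L(e)$, but the midpoint hypothesis halves this to $k/2$, which is exactly the refinement needed for the uniform-length estimates in the subsequent theorems.
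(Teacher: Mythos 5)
Your proof is correct and takes essentially the same approach as the paper, whose entire proof consists of the remark that ``the arguments in the proof of Lemma \ref{l:GeodLine} give the result'': you carry out exactly that specialization, using the same decomposition $h(\g^*)=[h(x)h(V_x)]\cup[h(V_x)h(V_y)]\cup[h(V_y)h(y)]$ via Remark \ref{r:hVertex}. The only refinement needed — that the hypothesis $x,y\in PM_\L V(\L(G))$ together with all edges of $\L(G)$ having length $k$ forces $d_{\L(G)}(x,V_x)$ and $d_{\L(G)}(y,V_y)$ to be $0$ or $k/2$, sharpening the end-piece bound from $<l_{max}$ to $\le k/2$ — is precisely the observation you make.
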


Also, we shall need a property of geodesic quadrilaterals in $G$.

\begin{lemma}\label{l:DistBetween1}
   For every $x,y,u,v$ in the graph $G$, let us define $\G:=[xu] \cup [uv] \cup [vy]$. If $L([xu]), L([vy]) \le \sup_{e\in E(G)} L(e)$, then

   \begin{equation}\label{eq:LemDistBetween1}
     \forall\ \a \in \G \ , \ \exists\ \b \in [xy]  \ : \ d_{G}(\a,\b)  \le  2 \d(G) + \sup_{e\in E(G)} L(e).
   \end{equation}
\end{lemma}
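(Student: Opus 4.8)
The plan is to recognize $\Gamma$ together with a geodesic $[xy]$ as the geodesic quadrilateral $Q=\{x,u,v,y\}$ with sides $[xu]$, $[uv]$, $[vy]$ and $[yx]$, and then to exploit the fact recalled in the Introduction that every geodesic quadrilateral is $2\d(G)$-thin. The estimate \eqref{eq:LemDistBetween1} will follow by distinguishing which of the three pieces of $\Gamma$ contains the point $\a$.

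First I would dispose of the two outer sides directly, without even invoking hyperbolicity. If $\a\in[xu]$, then $d_{G}(\a,x)\le L([xu])\le l_{max}$, and since $x\in[xy]$ the choice $\b:=x$ already gives $d_{G}(\a,\b)\le l_{max}\le 2\d(G)+l_{max}$. Symmetrically, if $\a\in[vy]$ then $d_{G}(\a,y)\le L([vy])\le l_{max}$ and $\b:=y\in[xy]$ works. Note that in these cases the term $2\d(G)$ is not needed at all.

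The remaining and principal case is $\a\in[uv]$. Here I would apply the $2\d(G)$-thinness of $Q$: there is a point $\b'$ in the union $[xu]\cup[vy]\cup[xy]$ of the other three sides with $d_{G}(\a,\b')\le 2\d(G)$. If $\b'\in[xy]$, simply take $\b:=\b'$. If instead $\b'\in[xu]$, then $d_{G}(\b',x)\le L([xu])\le l_{max}$, so by the triangle inequality $d_{G}(\a,x)\le 2\d(G)+l_{max}$ and $\b:=x\in[xy]$ does the job; the subcase $\b'\in[vy]$ is identical with $\b:=y$.

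The one step that requires care is exactly this last one: thinness only places $\a$ near \emph{some} one of the three remaining sides, which need not be $[xy]$, so the hypotheses $L([xu]),L([vy])\le l_{max}$ are used to ``retract'' a nearby point on an outer side back to the endpoint $x$ or $y$, both of which lie on $[xy]$. This retraction is precisely where the additive constant $l_{max}$ enters, and it is the heart of the argument; everything else is a triangle inequality.
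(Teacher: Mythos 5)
Your proof is correct and follows essentially the same argument as the paper: both consider the geodesic quadrilateral $Q=\{[xy],[xu],[uv],[vy]\}$, dispose of $\a\in[xu]\cup[vy]$ by projecting to the endpoint $x$ or $y$, and for $\a\in[uv]$ combine the $2\d(G)$-thinness of $Q$ with the bound $L([xu]),L([vy])\le \sup_{e\in E(G)}L(e)$ to move any nearby point on an outer side back onto $[xy]$. Your write-up is merely a more explicit case analysis of the paper's final triangle-inequality step.
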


\begin{proof}
 Let us consider the geodesic quadrilateral $Q=\{[xy] , [xu] , [uv] , [vy] \}$ and $\a \in \G$.
 If $\a \in [xu] \cup [vy]$, then there exists $\b \in \{x,y\} \subset [xy]$ such that $d_{G}(\a,\b)  \le  \max \{ L([xu]) , L([vy]) \} \le \sup_{e\in E(G)} L(e)$. If $\a \in [uv]$, then there exists $\a^\prime \in [xy] \cup [xu] \cup [vy]$ such that $d_{G}(\a , \a^\prime)  \le  2 \d(G)$ since $Q$ is a geodesic quadrilateral in $G$. So, there exists $\b \in [xy]$ such that $d_{G}(\a^\prime , \b)  \le  \sup_{e\in E(G)} L(e)$. Then, we obtain that $d_{G}(\a , \b) \le d_{G}(\a , \a^\prime) + d_{G}(\a^\prime , \b) \le  2 \d(G) + \sup_{e\in E(G)} L(e)$.
\end{proof}

\begin{theorem}\label{t:IneqLineGraph}
Let $G$ be a graph and consider $\L(G)$ the line graph of $G$. Then
\begin{equation}\label{eq:ThmIneqLineGraph}
    \d(G) \le \d(\L(G)) \le 5 \d(G) + 3 l_{max},
\end{equation}
with $l_{max}=\sup_{e\in E(G)} L(e)$. Furthermore, the first inequality is sharp: the equality is attained by every cycle graph.
\end{theorem}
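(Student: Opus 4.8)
The plan is to prove the two inequalities separately, reducing each to geodesic triangles that are cycles.

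\emph{Lower bound $\d(G)\le\d(\L(G))$.} By Corollary \ref{c:ClosedCurve} it suffices to bound $\d(T)$ for a geodesic triangle $T=\{[x_1x_2],[x_2x_3],[x_3x_1]\}$ in $G$ that is a cycle $C$. I would push $T$ into $\L(G)$ through the map $g_C$ of \eqref{eq:SimpleCycle}: by Corollary \ref{c:TriangGeod}, $T^*:=g_C(T)$ is a geodesic triangle in $\L(G)$ whose sides are the $g_C$-images of the sides of $T$. For a point $p$ on a side $A$ of $T$ and any point $q$ on the union $B$ of the two remaining sides, the relation $h\circ g_C=\mathrm{id}$ on $C$ together with Proposition \ref{p:Lipschitz} gives $d_{\L(G)}(g_C(p),g_C(q))\ge d_G(h(g_C(p)),h(g_C(q)))=d_G(p,q)$. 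Taking the infimum over $q\in B$ yields $d_{\L(G)}(g_C(p),g_C(B))\ge d_G(p,B)$, so $\d(T^*)\ge\d(T)$ and hence $\d(\L(G))\ge\d(T)$; supremizing over $T$ gives the claim.

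\emph{Upper bound.} By Lemma \ref{l:ClosedCurve} I only need to show that every geodesic triangle $T^*=\{[x^*y^*],[y^*z^*],[z^*x^*]\}$ in $\L(G)$ that is a simple closed curve is $(5\d(G)+3l_{max})$-thin. Fix $p^*\in[x^*y^*]$. The strategy is to transport the problem to $G$ through $h$, solve it using the hyperbolicity of $G$, and return to $\L(G)$ through the reverse estimate of Lemma \ref{t:lineQIso}; concretely, it is enough to produce $q^*\in[y^*z^*]\cup[z^*x^*]$ with $d_G(h(p^*),h(q^*))\le 5\d(G)+l_{max}$, since then Lemma \ref{t:lineQIso} gives $d_{\L(G)}(p^*,q^*)\le 5\d(G)+3l_{max}$.

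To find such a $q^*$ I would chain three estimates in $G$. First, by Lemma \ref{l:GeodLine} the path $h([x^*y^*])$ is a union of three geodesics with the two outer ones of length $<l_{max}$, so Lemma \ref{l:DistBetween1} produces $\b\in[h(x^*)h(y^*)]$ with $d_G(h(p^*),\b)\le 2\d(G)+l_{max}$. Second, the geodesic triangle $\{[h(x^*)h(y^*)],[h(y^*)h(z^*)],[h(z^*)h(x^*)]\}$ in $G$ is $\d(G)$-thin, so there is a point $w$ on one of the other two sides, say $[h(y^*)h(z^*)]$, with $d_G(\b,w)\le\d(G)$. The main obstacle is the third step: $w$ lies on the \emph{auxiliary} geodesic $[h(y^*)h(z^*)]$, not on $h([y^*z^*])$, the $h$-image of the actual side of $T^*$. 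I would overcome this by writing $h([y^*z^*])$ as a union of three geodesics (Lemma \ref{l:GeodLine}) and viewing it, together with the geodesic $[h(y^*)h(z^*)]$, as the four sides of a geodesic quadrilateral (degenerate cases only improve the bound). Since any geodesic quadrilateral is $2\d(G)$-thin, there is $q^*\in[y^*z^*]$ with $d_G(w,h(q^*))\le 2\d(G)$. Adding the three bounds gives $d_G(h(p^*),h(q^*))\le (2\d(G)+l_{max})+\d(G)+2\d(G)=5\d(G)+l_{max}$, as required.

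\emph{Sharpness.} For a cycle graph $C$, the line graph $\L(C)$ is again a cycle and, with the chosen edge lengths, $L(\L(C))=L(C)$ (exactly as in the proof of Corollary \ref{c:TriangGeod}); hence Corollary \ref{c:examplesdelta} yields $\d(\L(C))=L(\L(C))/4=L(C)/4=\d(C)$, so the first inequality becomes an equality.
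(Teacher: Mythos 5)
Your proposal is correct and follows essentially the same route as the paper's own proof: the lower bound via $g_C$, Corollary \ref{c:TriangGeod} and Proposition \ref{p:Lipschitz} with $h\circ g_C=\mathrm{id}$; the upper bound via the same three-step chain in $G$ (Lemma \ref{l:GeodLine} plus Lemma \ref{l:DistBetween1}, thinness of the auxiliary triangle with vertices $h(x^*),h(y^*),h(z^*)$, then the $2\d(G)$-thin quadrilateral built from $[h(y^*)h(z^*)]$ and the three geodesics composing $h([y^*z^*])$), transported back to $\L(G)$ by Lemma \ref{t:lineQIso}; and sharpness via Corollary \ref{c:examplesdelta}. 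The only differences are presentational (you phrase the upper bound pointwise and apply Lemma \ref{t:lineQIso} at the end, where the paper applies it first to the sup-inf quantities), so there is nothing to add.
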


\begin{proof}
First, let us consider a geodesic triangle $T=[xy]\cup[yz]\cup[zx]$ in $G$ that is a cycle. Hence, if $g_T$ is defined by \eqref{eq:SimpleCycle}, then Corollary \ref{c:TriangGeod} gives that $T^*=[g_T(x)g_T(y)]\cup[g_T(y)g_T(z)]\cup[g_T(z)g_T(x)]$ is a geodesic triangle in $\L(G)$; besides, by Proposition \ref{p:Lipschitz} we have that $d_{G}(u , v) \le d_{\L(G)}(g_T(u) , g_T(v))$ for every $u,v \in T$.

Let $\G = (\g_1 , \g_2 , \g_3)$ be a permutation of $( [xy] , [yz] , [zx] )$. So, by Proposition \ref{p:Lipschitz} we have

\[
\begin{aligned}
\displaystyle\sup_{a\in \g_1} \inf_{b\in \g_2 \cup \g_3} d_{G}(a , b)
& \le
\sup_{a\in \g_1} \inf_{b\in \g_2 \cup \g_3} d_{\L(G)} ( g_T(a) , g_T(b) )
\\
& \le
\sup_{a^*\in g_T(\g_1)} \inf_{b^*\in g_T(\g_2) \cup g_T(\g_3)} d_{\L(G)} ( a^* , b^* ).
\end{aligned}
\]

Since $\G$ is an arbitrary permutation, we obtain

\[
\d(T) \le \d(T^*) \le \d(\L(G)).
\]

This finishes the proof of the first inequality by Corollary \ref{c:ClosedCurve}.

\medskip

Now, let us consider a geodesic triangle $T^*=\{[x^* y^*] , [y^* z^*] , [z^* x^*]\}$ in $\L(G)$ that is a cycle, and a permutation $\G = (\g_1^* , \g_2^* , \g_3^*)$ of $( [x^*y^*] , [y^*z^*] ,$ $[z^*x^*] )$. So, by Lemma \ref{t:lineQIso} we have

\begin{equation}\label{eq:ThmIneqLineGraph1}
\begin{aligned}
\displaystyle\sup_{a^*\in \g_1^*} \inf_{b^*\in \g_2^* \cup \g_3^*} d_{\L(G)}(a^* , b^*)
& \le
\sup_{a^*\in \g_1^*} \inf_{b^*\in \g_2^* \cup \g_3^*} d_{G} ( h(a^*) , h(b^*) ) + 2 l_{max}
\\
& \le
\sup_{a\in h(\g_1^*)} \inf_{b\in h(\g_2^*) \cup h(\g_3^*)} d_{G} ( a , b ) + 2 l_{max},
\\
\displaystyle\sup_{a^*\in \g_1^*} d_{\L(G)}(a^* , \g_2^* \cup \g_3^*)
& \le
\sup_{a\in h(\g_1^*)} d_{G} ( a , h(\g_2^*) \cup h(\g_3^*) ) + 2 l_{max}.
\end{aligned}
\end{equation}

By Lemma \ref{l:GeodLine} we know that $h([x^* y^*])$ is the union of three geodesics $[\a_z^1 P_{\a_z^1}]$, $[P_{\a_z^1} P_{\a_z^2}]$ and $[P_{\a_z^2} \a_z^2]$ in $G$:

\[
h([x^* y^*]) = [\a_z^1 P_{\a_z^1}] \cup [P_{\a_z^1} P_{\a_z^2}] \cup [P_{\a_z^2} \a_z^2].
\]

Analogously, $h([y^* z^*])$ and $h([z^* x^*])$ are the union of three geodesics in $G$:

\[
h([y^* z^*]) = [\a_x^1 P_{\a_x^1}] \cup [P_{\a_x^1} P_{\a_x^2}] \cup [P_{\a_x^2} \a_x^2],
\]

\[
h([z^* x^*]) = [\a_y^1 P_{\a_y^1}] \cup [P_{\a_y^1} P_{\a_y^2}] \cup [P_{\a_y^2} \a_y^2].
\]

Now, let us consider a geodesic triangle $T:= \{[h(x^*) h(y^*)] , [h(y^*) h(z^*)] ,$ $[h(z^*) h(x^*)]\}$ in $G$. Without loss of generality we can assume that $\g_1^* = [x^*y^*]$, $\g_2^* = [y^*z^*]$ and $\g_3^* = [z^*x^*]$. Hence, by Lemma \ref{l:DistBetween1} we have that, if $\a \in h(\g_1^*)$ then there exists $\b \in [h(x^*) h(y^*)]$ such that

\begin{equation*}
    d_{G} ( \a , \b) \le 2 \d(G) + l_{max}.
\end{equation*}

Since $\b \in [h(x^*) h(y^*)]$, there exists $\b^\prime \in [h(y^*) h(z^*)] \cup [h(z^*) h(x^*)]$ such that

\begin{equation*}
    d_{G} (\b , \b^\prime) \le \d(G).
\end{equation*}

Without loss of generality we can assume that $\b^\prime \in [h(y^*) h(z^*)]$. If we consider the geodesic quadrilateral $\{ [\a_x^1 \a_x^2] , [\a_x^1 P_{\a_x^1}] , [P_{\a_x^1} P_{\a_x^2}] , [P_{\a_x^2} \a_x^2] \}$, then there exists $\a^\prime \in h([y^*z^*])$ such that

\begin{equation*}\label{eq:ThmIneqLineGraph2}
    d_{G} (\b^\prime , \a^\prime) \le 2 \d(G).
\end{equation*}

Thus, since $d_{G} ( \a , h(\g_2^*) \cup h(\g_3^*) ) \le d_{G} ( \a , \b) + d_{G} ( \b , \b^\prime) + d_{G} ( \b^\prime , \a^\prime)$ we obtain that

\begin{equation}\label{eq:ThmIneqLineGraph2}
    d_{G} ( \a , h(\g_2^*) \cup h(\g_3^*) )  \le  5 \d(G) + l_{max}.
\end{equation}

Then, by \eqref{eq:ThmIneqLineGraph1} and \eqref{eq:ThmIneqLineGraph2} we obtain

\[
\displaystyle\sup_{a^*\in \g_1^*} d_{\L(G)}(a^* , \g_2^* \cup \g_3^*)
 \le 5 \d(G) + 3 l_{max}.
\]

Finally, since $\G$ is an arbitrary permutation of any triangle that is a cycle, Corollary \ref{c:ClosedCurve} gives

\[
\d(\L(G)) \le  5 \d(G) + 3 l_{max}.
\]

Corollary \ref{c:examplesdelta} gives that $\d(G) = \d(\L(G)) =L(G)/4$ for every cycle graph $G$.
\end{proof}

\begin{remark}\label{t:examplesLine}
The cycle graphs are not the only graphs $G$ with $\d(\L(G))=\d(G)$, as the
following example shows.
Let $C_n$ be the cycle graph with $n$ vertices and every edge with length $k$, and
$u,v\in V(C_n)$ with $d_{C_n}(u,v)=2k$;
if $G$ is the graph obtained by adding the edge $[u,v]$ (also with length $k$) to $C_n$,
one can check that
$\d(\L(G))=\d(G)=kn/4$.
\end{remark}

Let us consider now graphs with edges of length $k$. We will improve Theorem \ref{t:IneqLineGraph} in this case.

\begin{corollary}\label{c:IneqLineGraphk}
Let $G$ be any graph such that every edge has length $k$ and consider $\L(G)$ the line graph of $G$. Then
\begin{equation*}
    \d(G) \le \d(\L(G)) \le 5 \d(G) + \frac{5k}2.
\end{equation*}
\end{corollary}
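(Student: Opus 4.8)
The plan is to re-run the proof of the second inequality in Theorem \ref{t:IneqLineGraph}, exploiting that $\L(G)$ also has all edges of length $k$ (as observed right after the definition of $\L(G)$) in order to trim the additive constant from $3k$ down to $5k/2$. The first inequality $\d(G)\le\d(\L(G))$ is already contained in Theorem \ref{t:IneqLineGraph}, so only the upper bound needs work. If $G$ is not hyperbolic, then $\d(G)=\infty$ and there is nothing to prove, so I may assume $\d(G)<\infty$; then Theorem \ref{t:IneqLineGraph} already gives $\d(\L(G))\le 5\d(G)+3k<\infty$, so $\L(G)$ is hyperbolic as well.

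Since $\L(G)$ is a hyperbolic graph with every edge of length $k$, Theorem \ref{t:DiscreCycle} supplies a geodesic triangle $T^*=\{x^*,y^*,z^*\}$ that is a cycle, with $\d(T^*)=\d(\L(G))$ and $x^*,y^*,z^*\in PMV(\L(G))$. Because all edges of $\L(G)$ have the same length, $PM_\L(\L(G))$ is exactly the set of midpoints, so $PMV(\L(G))=PM_\L V(\L(G))$ and hence the three vertices lie in $PM_\L V(\L(G))$. This is precisely the hypothesis needed to apply the sharper Lemma \ref{l:GeodLinek} (instead of Lemma \ref{l:GeodLine}) to each side of $T^*$: for each side $\g_i^*$ the path $h(\g_i^*)$ decomposes as a union of three geodesics whose two end pieces now have length at most $k/2$, rather than merely $<k$.

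With this in hand I would repeat the chain of estimates of Theorem \ref{t:IneqLineGraph} essentially verbatim. The only point where the length of these end pieces enters is the application of Lemma \ref{l:DistBetween1}: its proof in fact yields $d_G(\a,\b)\le 2\d(G)+\max\{L([xu]),L([vy])\}$, and here this maximum is $\le k/2$, so one obtains $d_G(\a,\b)\le 2\d(G)+k/2$ in place of $2\d(G)+k$. The two remaining contributions, namely the $\d(G)$ coming from the $\d$-thinness of the triangle $\{[h(x^*)h(y^*)],[h(y^*)h(z^*)],[h(z^*)h(x^*)]\}$ and the $2\d(G)$ coming from the $2\d(G)$-thinness of the auxiliary geodesic quadrilateral, do not involve $k$ and are unchanged. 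Summing them gives $d_G(\a,h(\g_2^*)\cup h(\g_3^*))\le 5\d(G)+k/2$, an improvement of \eqref{eq:ThmIneqLineGraph2}.

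Finally I would feed this into the inequality \eqref{eq:ThmIneqLineGraph1} provided by Lemma \ref{t:lineQIso}, which contributes the fixed term $2l_{max}=2k$, so that $\sup_{a^*\in\g_1^*}d_{\L(G)}(a^*,\g_2^*\cup\g_3^*)\le 5\d(G)+k/2+2k=5\d(G)+5k/2$ for every permutation of the sides of $T^*$. Taking the maximum over the three permutations gives $\d(T^*)\le 5\d(G)+5k/2$, and since $\d(T^*)=\d(\L(G))$ by the choice of $T^*$, the bound follows; here Theorem \ref{t:DiscreCycle} plays the role that Corollary \ref{c:ClosedCurve} played in Theorem \ref{t:IneqLineGraph}. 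The single new idea is to restrict to a triangle with vertices in $PM_\L V(\L(G))$ so that the $k/2$ bound of Lemma \ref{l:GeodLinek} becomes available; the main thing to verify is that this sharper half-edge bound propagates correctly through Lemma \ref{l:DistBetween1} without degrading the other constants, i.e.\ that the saving is exactly $k/2$ and not less.
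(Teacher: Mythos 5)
Your proposal is correct and follows essentially the same route as the paper: the paper's proof likewise invokes Theorem \ref{t:DiscreCycle} to reduce to geodesic triangles with vertices in $PM_\L V(\L(G))=PMV(\L(G))$ and then reruns the proof of Theorem \ref{t:IneqLineGraph} with Lemma \ref{l:GeodLine} replaced by Lemma \ref{l:GeodLinek}. Your write-up is in fact more careful than the paper's on two points it leaves implicit: the reduction to the case $\d(G)<\infty$ so that Theorem \ref{t:DiscreCycle} applies to $\L(G)$, and the observation that the proof of Lemma \ref{l:DistBetween1} actually yields the sharper bound $2\d(G)+\max\{L([xu]),L([vy])\}\le 2\d(G)+k/2$, which is exactly what turns $3k$ into $5k/2$.
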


\begin{proof}
We just need to prove the second inequality. By Theorem \ref{t:DiscreCycle} it suffices to consider geodesic triangles in $\L(G)$ with vertices in $PM_\L V(\L(G))=PMV(\L(G))$.
Then the arguments in the proof of Theorem \ref{t:IneqLineGraph}, replacing Lemma \ref{l:GeodLine} by Lemma \ref{l:GeodLinek}, give the result.
\end{proof}

\medskip

In \cite[Corollary 20]{RSVV} we find the following result.

\begin{lemma}
\label{l:deg}
Let $G$ be any graph with $m$ edges such that every edge has length $k$. Then $\d(G)\le km/4$, and the equality is attained if and only if $G$ is a cycle graph.
\end{lemma}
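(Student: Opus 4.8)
The plan is to combine the reduction in Corollary~\ref{c:ClosedCurve} with a sharp bound on the thinness of a single geodesic triangle that is a cycle. Concretely, I would first establish the elementary inequality $\d(T)\le L(T)/4$ for every geodesic triangle $T=\g_1\cup\g_2\cup\g_3$ in $G$ that is a cycle, with corners $x,y,z$. To see this, fix a side, say $\g_1=[xy]$, and a point $a\in\g_1$. Its endpoints $x,y$ lie on the other two sides (since $x\in[zx]$ and $y\in[yz]$), so $d_{G}(a,\g_2\cup\g_3)\le\min\{d_{G}(a,x),d_{G}(a,y)\}$; because $\g_1$ is a geodesic, $d_{G}(a,x)+d_{G}(a,y)=L(\g_1)$, whence the minimum is at most $L(\g_1)/2$. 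Taking the supremum over the three sides gives $\d(T)\le\max_i L(\g_i)/2$, and since each side is a geodesic the triangle inequality $L(\g_1)=d_{G}(x,y)\le L(\g_2)+L(\g_3)$ forces the longest side to satisfy $\max_i L(\g_i)\le L(T)/2$; therefore $\d(T)\le L(T)/4$.

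Next, since $T$ is a simple closed curve in $G$ and every edge has length $k$, its length satisfies $L(T)\le mk$, so $\d(T)\le mk/4$. Corollary~\ref{c:ClosedCurve} then yields $\d(G)\le mk/4$, which is the inequality. For the equality, the easy direction is immediate: if $G$ is a cycle graph then $L(G)=mk$, and Corollary~\ref{c:examplesdelta} gives $\d(G)=L(G)/4=mk/4$.

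For the converse I would assume $\d(G)=mk/4$; then $m<\infty$, so $G$ is bounded and hence hyperbolic, and Theorem~\ref{t:DiscreCycle} supplies a geodesic triangle $T$ that is a cycle with $\d(T)=\d(G)=mk/4$. Feeding this into the chain
\[
mk/4=\d(T)\le L(T)/4\le mk/4
\]
forces every inequality to be an equality; in particular $L(T)=mk$. A simple closed curve of length $mk$ in a connected graph whose total edge length is $mk$ must cover all of $G$, so $G$ is topologically a circle subdivided into $m$ edges of length $k$, i.e. the cycle graph $C_m$.

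The genuinely delicate points are, first, the verification that the nearest point of the two opposite sides to a point on a given geodesic side is one of the two shared corners, so that the thinness of a cycle-triangle is governed by its longest side; and second, making the equality analysis airtight, which requires knowing that $\d(G)$ is actually \emph{attained} by some cycle-triangle (exactly what Theorem~\ref{t:DiscreCycle} provides) and that $L(T)=mk$ forces $G=C_m$ rather than merely containing a spanning cycle. Everything else is a routine chase through the inequalities assembled above.
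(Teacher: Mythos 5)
Your proposal is correct, but it cannot be ``essentially the same as the paper's proof'' for a simple reason: the paper does not prove Lemma \ref{l:deg} at all; it imports the statement verbatim from \cite[Corollary 20]{RSVV}. What you have written is therefore a genuinely self-contained argument, and a nice one, because it uses only tools that this paper does state: Corollary \ref{c:ClosedCurve} to reduce to geodesic triangles that are cycles; the elementary estimate $\d(T)\le \max_i L(\g_i)/2 \le L(T)/4$ for such a triangle (your two-corner argument plus the triangle inequality for the longest side is sound); the observation that a simple closed curve in a metric graph is a union of distinct full edges, whence $L(T)\le km$; Corollary \ref{c:examplesdelta} for the easy direction of the equality; and Theorem \ref{t:DiscreCycle} to make the converse airtight by producing a cycle-triangle that actually \emph{attains} $\d(G)$ --- exactly the point where a naive supremum argument would break down, and which you correctly identified as the crux. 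Two small caveats, neither fatal. First, ``$\d(G)=mk/4$, then $m<\infty$'' is not a deduction: if $G$ had infinitely many edges, both sides could equal $+\infty$ (e.g.\ for a non-hyperbolic infinite graph), so you should simply read the hypothesis ``with $m$ edges'' as asserting that $m$ is a finite integer, which is clearly the intended meaning in \cite{RSVV}. Second, the degenerate case $m=0$ (a single vertex) formally satisfies the equality without being a cycle graph; this is an edge case of the statement as quoted rather than a defect of your argument. With those readings fixed, your proof is complete, and it has the added value of making the paper self-contained where it currently relies on an external reference.
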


\medskip

\begin{theorem}
\label{t:j}
Let $G$ be any graph such that every edge has length $k$, with $n$ vertices and maximum degree $\Delta$. Then
$$\d(\L(G))\le nk\Delta(\Delta-1)/8,$$
and the equality is attained if and only if $G$ is a cycle graph.
\end{theorem}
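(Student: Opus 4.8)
The plan is to reduce the bound to Lemma \ref{l:deg} applied to $\L(G)$ itself, and then to control the number of edges of $\L(G)$ by the maximum degree of $G$. First I would count the edges of $\L(G)$. The edges of $G$ incident to a fixed vertex $v$ span a complete subgraph $K_{\deg_G(v)}$ in $\L(G)$, and distinct such complete subgraphs share vertices but no edges of $\L(G)$; hence the number of edges of $\L(G)$ is
\[
m^* = \sum_{v\in V(G)} \binom{\deg_G(v)}{2} = \frac12\sum_{v\in V(G)} \deg_G(v)\bigl(\deg_G(v)-1\bigr).
\]
Since every edge of $G$ has length $k$, every edge of $\L(G)$ also has length $k$, so Lemma \ref{l:deg} applies to $\L(G)$ and gives $\d(\L(G)) \le km^*/4$.

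Next I would bound $m^*$ using $\Delta$. As $\deg_G(v)\le\Delta$ for every $v$ and $t\mapsto t(t-1)$ is nondecreasing for $t\ge 0$, each summand satisfies $\deg_G(v)(\deg_G(v)-1)\le\Delta(\Delta-1)$, so $m^*\le n\Delta(\Delta-1)/2$. Combining the two estimates yields
\[
\d(\L(G)) \le \frac{k}{4}\cdot\frac{n\Delta(\Delta-1)}{2} = \frac{nk\Delta(\Delta-1)}{8},
\]
which is the asserted inequality.

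For the equality case I would trace back the two inequalities just used (assuming $\Delta\ge 2$; the cases $\Delta\le1$ are degenerate and checked directly). Equality forces simultaneously $\d(\L(G))=km^*/4$ and $m^*=n\Delta(\Delta-1)/2$. By the equality clause of Lemma \ref{l:deg} applied to $\L(G)$, the first identity says $\L(G)$ is a cycle graph, hence $2$-regular; the second identity forces $\deg_G(v)=\Delta$ for every $v$. The one genuinely non-routine point is to translate these two facts back to $G$: a vertex $V_e$ of $\L(G)$ with $e=[u,v]$ has degree $\deg_G(u)+\deg_G(v)-2$, so combining $2$-regularity of $\L(G)$ with $\deg_G\equiv\Delta$ gives $2\Delta-2=2$, i.e.\ $\Delta=2$. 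Then $G$ is connected and $2$-regular, so $G$ is a cycle graph. Conversely, if $G=C_n$ then $\L(C_n)=C_n$ and Corollary \ref{c:examplesdelta} gives $\d(\L(G))=nk/4=nk\Delta(\Delta-1)/8$, so equality holds. I expect this final translation step to be the main obstacle; in particular, one must notice that the cycle condition on $\L(G)$ alone is not enough (for instance $\L(K_{1,3})=C_3$ is a cycle), and that it is the combination of \emph{both} equality conditions that forces $\Delta=2$.
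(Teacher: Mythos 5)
Your proposal is correct and takes essentially the same approach as the paper: both compute $m(\L(G))=\frac12\sum_{v}\deg_G(v)\bigl(\deg_G(v)-1\bigr)$ (the paper derives this from the identity $2\bigl(m(\L(G))+m(G)\bigr)=\sum_{i}(\deg_G(v_i))^2$, while you count edges inside the complete subgraphs $K_{\deg_G(v)}$), bound this by $n\Delta(\Delta-1)/2$, and apply Lemma~\ref{l:deg} to $\L(G)$. The only difference is that you spell out the equality analysis in detail (combining the two equality conditions to force $\Delta=2$, and noting that the cycle condition on $\L(G)$ alone would not suffice), whereas the paper's proof asserts the equality case without elaboration.
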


\begin{proof}
It is well known that $2(m(\L(G)) + m(G))= \sum_{i=1}^{n} (\deg_G(v_{i}))^{2}$,
where $\deg_G(v_{i})$ are the degrees of the vertices of $G$. Since
$2m(G)= \sum_{i=1}^{n} \deg_G(v_{i})$, Lemma \ref{l:deg} gives the inequality, and the equality is attained if and only if $G$ is a cycle graph.
\end{proof}

Using the argument in the proof of Theorem \ref{t:j} we also obtain the following inequality.

\begin{corollary}
\label{c:sumdd}
If $G$ is any graph such that every edge has length $k$, with $n$ vertices $v_1,\dots,v_n$, then
$$
\d(\L(G)) + \d(G) \le \frac{k}8 \sum_{i=1}^{n} (\deg_G(v_{i}))^{2} ,
$$
and the equality is attained if and only if $G$ is a cycle graph.
\end{corollary}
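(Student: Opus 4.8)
The plan is to mimic the argument used in the proof of Theorem \ref{t:j}, but now applying Lemma \ref{l:deg} simultaneously to $G$ and to $\L(G)$ rather than to $\L(G)$ alone. First I would recall the edge-counting identity already invoked there, namely
$$
2\bigl(m(\L(G)) + m(G)\bigr) = \sum_{i=1}^{n} (\deg_G(v_{i}))^{2},
$$
where $m(\cdot)$ denotes the number of edges of a graph. Since every edge of $G$ has length $k$, every edge of $\L(G)$ also has length $k$ (as noted just after the definition of $\L(G)$ in Section 2), so Lemma \ref{l:deg} is applicable to both graphs.

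Next I would apply Lemma \ref{l:deg} separately to each graph, obtaining $\d(G) \le k\,m(G)/4$ and $\d(\L(G)) \le k\,m(\L(G))/4$. Adding these two inequalities and substituting the identity above yields
$$
\d(\L(G)) + \d(G) \le \frac{k}{4}\bigl(m(\L(G)) + m(G)\bigr) = \frac{k}{8}\sum_{i=1}^{n} (\deg_G(v_{i}))^{2},
$$
which is exactly the claimed bound.

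For the equality statement I would argue as follows. Since the displayed conclusion is obtained by summing the two inequalities $\d(G) \le k\,m(G)/4$ and $\d(\L(G)) \le k\,m(\L(G))/4$, equality in the conclusion forces equality in each of them. By the equality clause of Lemma \ref{l:deg}, equality in $\d(G) \le k\,m(G)/4$ holds precisely when $G$ is a cycle graph; hence equality in the conclusion implies that $G$ is a cycle. Conversely, if $G$ is a cycle graph $C_n$, then $\L(C_n)=C_n$ is again a cycle graph, so both individual inequalities become equalities and therefore so does their sum.

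I do not expect a genuine obstacle here: the statement is essentially a bookkeeping consequence of Lemma \ref{l:deg} combined with the degree-sum identity, exactly as the phrase ``using the argument in the proof of Theorem \ref{t:j}'' suggests. The only point that needs a brief verification is that the line graph of a cycle is again a cycle, which keeps the equality case self-consistent; this is immediate from the definition of $\L(G)$.
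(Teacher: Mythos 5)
Your proposal is correct and is precisely the argument the paper intends: the paper's "proof" of Corollary \ref{c:sumdd} is just the pointer "using the argument in the proof of Theorem \ref{t:j}", which amounts to applying Lemma \ref{l:deg} to both $G$ and $\L(G)$, summing, and invoking the identity $2(m(\L(G))+m(G))=\sum_{i=1}^{n}(\deg_G(v_i))^2$, exactly as you do. Your treatment of the equality case (equality in the sum forces equality in each term, the one for $G$ gives that $G$ is a cycle via Lemma \ref{l:deg}, and conversely $\L(C_n)\cong C_n$ with edges of the same length $k$) is also the intended, and correct, bookkeeping.
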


\

\section{Acknowledgements}
We would like to thank the referee for a careful reading of the manuscript
and for some helpful suggestions.

This work was partly supported by the Spanish Ministry of Science and Innovation
through project MTM2009-07800 and a grant from CONACYT
(CONACYT-UAG I0110/62/10), M\'exico.

\

\end{document}